%%%%%%%%%%%%%%%%%%%%%%% file template.tex %%%%%%%%%%%%%%%%%%%%%%%%%
%
% This is a general template file for the LaTeX package SVJour3
% for Springer journals.          Springer Heidelberg 2010/09/16
%
% Copy it to a new file with a new name and use it as the basis
% for your article. Delete % signs as needed.
%
% This template includes a few options for different layouts and
% content for various journals. Please consult a previous issue of
% your journal as needed.
%
%%%%%%%%%%%%%%%%%%%%%%%%%%%%%%%%%%%%%%%%%%%%%%%%%%%%%%%%%%%%%%%%%%%
%
% First comes an example EPS file -- just ignore it and
% proceed on the \documentclass line
% your LaTeX will extract the file if required
% [arxiv_v2: filecontents example.eps stripped, 188 chars]
\RequirePackage{fix-cm}
\documentclass[smallextended]{svjour3}       % onecolumn (second format)
\smartqed  % flush right qed marks, e.g. at end of proof
\usepackage{graphicx}
\usepackage{amsfonts}
\usepackage{amsmath}
\usepackage{mathtools}
\def\dprod{\mathop{\displaystyle \prod }}
%
% \usepackage{mathptmx}      % use Times fonts if available on your TeX system
%
% insert here the call for the packages your document requires
%\usepackage{latexsym}
% etc.
%
% please place your own definitions here and don't use \def but
% \newcommand{}{}
%
% Insert the name of "your journal" with
% \journalname{myjournal}
%
\begin{document}

\title{The Hermite-Hadamard inequality revisited: Some new proofs and
	applications%\thanks{Grants or other notes
%about the article that should go on the front page should be
%placed here. General acknowledgments should be placed at the end of the article.}
}
%\subtitle{Do you have a subtitle?\\ If so, write it here}

%\titlerunning{Short form of title}        % if too long for running head

\author{Ilham A. Aliev         \and
	Mehmet E. Tamar  \and
	Cagla Sekin %etc.
}
%\authorrunning{Short form of author list} % if too long for running head

\institute{I. A. ALIEV \at
	Department of mathematics, Akdeniz University, Antalya, TURKEY \\
	\email{ialiev@akdeniz.edu.tr}           %  \\
	%             \emph{Present address:} of F. Author  %  if needed
	\and
	M. E. TAMAR \at
	Institute of science, Akdeniz University, Antalya, TURKEY\\
	\and
	C. SEKIN \at
	Institute of science, Akdeniz University, Antalya, TURKEY\\
}
\date{Received: date / Accepted: date}
% The correct dates will be entered by the editor
% The correct dates will be entered by the editor

\maketitle

\begin{abstract}
	New proofs of the classical Hermite-Hadamard inequality are presented and
	several applications are given, including Hadamard-type inequalities for the
	functions, whose derivatives have inflection points or whose derivatives are convex. Morever, 
	some estimates from below and above for the  first moments of functions $%
	f:[a,b]\rightarrow 
	%TCIMACRO{\U{211d} }%
	%BeginExpansion
	\mathbb{R}
	%EndExpansion
	$ about the center point $c=(a+b)/2$ are obtained and the reverse Hardy
	inequality for convex functions $f:[0,\infty )\rightarrow (0,\infty )$ is
	established.
	
	\keywords{Convex functions \and  Hermite-Hadamard inequality \and Jensen inequality \and Fejer inequality. }
	% \PACS{PACS code1 \and PACS code2 \and more}
	 \subclass{26D15 \and 26A51 \and 26D10}
\end{abstract}

\section{Introduction}

The famous Hermite-Hadamard inequality (HH) asserts that the integral mean value of a convex function $f:[a,b]\rightarrow 
%TCIMACRO{\U{211d} }%
%BeginExpansion
\mathbb{R}
%EndExpansion
$ can be estimated above and belove by its values at the points $a,b$ and $%
(a+b)/2$. More precisely,%
\begin{equation}
	f\left( \frac{a+b}{2}\right) \leq \frac{1}{b-a}\int_{a}^{b}f(x)dx\leq \frac{%
		f(a)+f(b)}{2}.  \tag{HH}  \label{HH}
\end{equation}

Equality holds only for functions of the form $f(x)=cx+d$. According to the
notations by Niculescu and Persson \cite{10}, the right and left parts of
(HH) we denote by (RHH) and (LHH), respectively. Some authors also refer to
the (RHH) as Hadamard's inequality.

(HH) has many generalizations, extensions, refinements and there is a large
number of papers and book's chapters in this area; see, e.g. books by
Niculescu and Persson \cite{11}; Mitrinovic, Pecaric and Fink \cite{23};
Dragomir and Pearce \cite{6} and papers \cite{22,27,1,2,3,28,4,5,7,8,12,9,19,20,21,14,25,10,13,26,29,16,17,18}, which are a small part of the relevant references.

The plan of this article is as follows.

In section 2 we give two new proofs of (HH), one of which is extremely short
(so to speak, "without pulling the pen out of paper"), and the other is
based on Riemann's integral sums. As an application, we give an estimation
from below and above of the integral of the convex function $f:[0,\infty
)\rightarrow (0,\infty )$ via the series $\overset{\infty }{\underset{1}{%
		\sum }}f(k)$ and $\overset{\infty }{\underset{1}{\sum }}f\left( k-\frac{1}{2}%
\right) .$

In section 3, we give some new inequalities arising as a combination of the
(HH) with the Hardy inequality and some variant of H\"{o}lder's inequality.
For example, as a consequence we prove that, if $f:[0,\infty )\rightarrow
(0,\infty )$ is convex and $f\in L_{p}(0,\infty )$, $\forall p>1$, then%
\begin{equation*}
	\underset{p\rightarrow \infty }{\lim }\frac{\left\Vert \frac{1}{x}%
		\int_{0}^{x}f\right\Vert _{p}}{\left\Vert f\right\Vert _{p}}=1.
\end{equation*}

Section 4 is devoted to Hadamard's type inequality, i.e. (RHH) for the
functions whose first derivatives have an inflection point. As a particular
case, we show that if $f^{\prime }$ is concave on $\left[ a,\frac{a+b}{2}\right] $ and
convex on $\left[ \frac{a+b}{2},b\right] $, then%
\begin{equation*}
	\frac{f(a)+f(b)}{2}-\frac{1}{b-a}\int_{a}^{b}f(x)dx\leq \frac{b-a}{12}\left(
	f^{\prime }(b)-f^{\prime }(a)\right) .
\end{equation*}

In the last 5th section we prove various inequalities for functions having
convex first or second order derivatives. As far as we know, in the
literature on this theme there are some inequalities for functions whose 
\textit{absolute values} of the derivatives are convex, see, e.g. \cite{27,28,29}. In our Theorem \ref{teo5.1}, \ref{teo5.2}, and \ref{teo5.3} in
section 5, the convexity condition is imposed on the derivatives themselves,
but not on their absolute values. One of the interesting particular results
obtained in this section is as follows.

Given $f:[a,b]\rightarrow 
%TCIMACRO{\U{211d} }%
%BeginExpansion
\mathbb{R}
%EndExpansion
$, let $f^{\prime}$ be convex. Then%
\begin{equation*}
	\int_{\frac{a+b}{2}}^{b}f(x)dx-\int_{a}^{\frac{a+b}{2}}f(x)dx\leq \frac{b-a}{%
		4}(f(b)-f(a)).
\end{equation*}

Another result in this section is the estimation from above and below of the
first moment of a function $f:[a,b]\rightarrow 
%TCIMACRO{\U{211d} }%
%BeginExpansion
\mathbb{R}
%EndExpansion
$ about the center point $c=(a+b)/2$, i.e. the integral $M_{f}=\int_{a}^{b}%
\left( x-\frac{a+b}{2}\right) f(x)dx$, when $f^{\prime }$ is convex.

\section{Two new proof of (HH) and some applications}

At first, we give an auxiliary inequality that is satisfied by convex
functions.

\begin{lemma}
	(cf. Lemma 1.3 in \cite{25}) Let $f$ be a convex function on $[a,b]$. Then%
	\begin{equation}
		f(a)+f(b)\geq f(a+b-x)+f(x)\text{, (}\forall x\in \lbrack a,b]\text{).}
		\label{2.1}
	\end{equation}
\end{lemma}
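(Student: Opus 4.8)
The plan is to write the point $x$ and its mirror image $a+b-x$ (the reflection of $x$ across the midpoint $(a+b)/2$) as convex combinations of the two endpoints $a$ and $b$ with \emph{complementary} weights, and then apply the defining inequality of convexity to each point separately before adding. The essential observation is that it is $x$ and $a+b-x$ which are genuine convex combinations of $a,b$ (not the other way around), and this is what fixes the direction of the inequality.

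First I would fix $x\in[a,b]$, assuming $a<b$ since the case $a=b$ is trivial, and introduce the parameter $t=\frac{x-a}{b-a}\in[0,1]$, so that $x=(1-t)a+tb$. A one-line computation then shows that the reflected point satisfies $a+b-x=ta+(1-t)b$, i.e. it is the convex combination of $a$ and $b$ obtained by interchanging the weights $t$ and $1-t$. This symmetric bookkeeping of the weights is the crux of the whole argument.

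Next I would invoke convexity twice and add the resulting inequalities:
\begin{align*}
	f(x)&=f\bigl((1-t)a+tb\bigr)\le (1-t)f(a)+tf(b),\\
	f(a+b-x)&=f\bigl(ta+(1-t)b\bigr)\le tf(a)+(1-t)f(b).
\end{align*}
Upon addition the coefficients of $f(a)$ sum to $(1-t)+t=1$, and likewise those of $f(b)$, so the right-hand side collapses exactly to $f(a)+f(b)$. This gives $f(x)+f(a+b-x)\le f(a)+f(b)$, which is precisely \eqref{2.1}.

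There is no deep obstacle here; the only real pitfall is the sign of the inequality. The tempting wrong move is to try to express the endpoints $a,b$ as combinations of $x$ and $a+b-x$, but those representations require coefficients outside $[0,1]$ (an "inverse" convex combination), and convexity then yields the reverse inequality. The correct route is the opposite one carried out above: choosing complementary weights $t$ and $1-t$ for the two symmetric points is exactly what makes the cross terms cancel cleanly and deliver the stated bound.
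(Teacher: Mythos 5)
Your proof is correct, and it is essentially the paper's argument: both write $x$ and $a+b-x$ as convex combinations of $a,b$ with complementary weights $t$ and $1-t$ and apply convexity twice (you add the two inequalities, while the paper chains them, which is an immaterial difference). No gaps.
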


\begin{proof}
	Let $x\in \lbrack a,b]$. There exists a $t\in \lbrack 0,1]$ such that $%
	x=ta+(1-t)b$. Then $a+b-x=(1-t)a+tb$ and therefore, 
	\begin{eqnarray*}
		f(a+b-x) &=&f((1-t)a+tb)\leq (1-t)f(a)+tf(b) \\
		&=&f(a)+f(b)-[tf(a)+(1-t)f(b)] \\
		&\leq &f(a)+f(b)-f(ta+(1-t)b) \\
		&=&f(a)+f(b)-f(x).
	\end{eqnarray*}
\end{proof}

By making use of (\ref{2.1}) we give here a short proof of the (HH),
"without pulling the pen out of paper".

\begin{proof} \textit{of (HH)} 
	Integrating the inequality $f(a)+f(b)\geq f(a+b-x)+f(x)$ over $[a,b]$ and
	using 
	\begin{equation*}
		\int_{a}^{b}f(a+b-x)dx=\int_{a}^{b}f(x)dx
	\end{equation*}%
	we have 
	\begin{eqnarray*}
		(b-a)(f(a)+f(b)) &\geq &2\int_{a}^{b}f(x)dx=\int_{a}^{b}(f(a+b-x)+f(x))dx \\
		&=&2\int_{a}^{b}\frac{f(a+b-x)+f(x)}{2}dx\geq 2\int_{a}^{b}f\left( \frac{%
			a+b-x+x}{2}\right) dx \\
		&=&2(b-a)f\left( \frac{a+b}{2}\right) .
	\end{eqnarray*}
\end{proof}

\begin{remark}
	Although the (HH) has several proofs, as far as we know the first simple
	proof was given by Azbetia \cite{1}; (see, also Niculescu and Persson \cite%
	{10}, p. 664). Another simple proof and refinement was given by El Farissi 
	\cite{8}.
\end{remark}

\begin{remark}
	The inequality (\ref{2.1}) enables one also to give a short proof of
	"Hadamard part" of the Fejer inequality:
	
	If $g\geq 0$ is integrable and symmetric with respect to $\frac{a+b}{2}$,
	i.e. $g(a+b-x)=g(x)$, ($x\in \lbrack a,b]$), we have%
	\begin{eqnarray*}
		\int_{a}^{b}f(x)g(x)dx &=&\frac{1}{2}\left[ \int_{a}^{b}f(x)g(x)dx+%
		\int_{a}^{b}f(a+b-x)g(a+b-x)dx\right] \\
		&=&\frac{1}{2}\int_{a}^{b}(f(x)+f(a+b-x))g(x)dx\overset{(\ref{2.1})}{\leq }%
		\frac{f(a)+f(b)}{2}\int_{a}^{b}g(x)dx.
	\end{eqnarray*}
\end{remark}

The following theorem is the "Riemann integral's sums version" of (HH).

\begin{theorem}
	If $f:[a,b]\rightarrow 
	%TCIMACRO{\U{211d} }%
	%BeginExpansion
	\mathbb{R}
	%EndExpansion
	$ is convex and $x_{k}=a+k\frac{b-a}{n}$, ($k=1,2,\cdots ,n$) then for any $%
	n\in 
	%TCIMACRO{\U{2115} }%
	%BeginExpansion
	\mathbb{N}
	%EndExpansion
	$ we have 
	\begin{equation}
		f\left( \frac{\left( 1-\frac{1}{n}\right) a+\left( 1+\frac{1}{n}\right) b}{2}%
		\right) \leq \frac{1}{n}\overset{n}{\underset{k=1}{\sum }}f(x_{k})\leq \frac{%
			1}{2}\left[ f(a)\left( 1-\frac{1}{n}\right) +f(b)\left( 1+\frac{1}{n}\right) %
		\right] .  \label{2.2}
	\end{equation}
\end{theorem}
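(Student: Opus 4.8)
The plan is to derive the two bounds in (\ref{2.2}) separately, each one directly from convexity, with no limiting procedure needed. The whole argument rests on the observation that every node $x_{k}=a+k\frac{b-a}{n}$ is a convex combination of the endpoints,
\[
x_{k}=\left(1-\frac{k}{n}\right)a+\frac{k}{n}\,b,\qquad k=1,2,\dots,n.
\]

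For the right-hand inequality I would apply the defining convexity inequality to each node, obtaining $f(x_{k})\le \left(1-\frac{k}{n}\right)f(a)+\frac{k}{n}f(b)$, and then average over $k$. Everything reduces to the Gauss sum $\sum_{k=1}^{n}k=\frac{n(n+1)}{2}$, which gives $\frac{1}{n}\sum_{k=1}^{n}\frac{k}{n}=\frac{1}{2}\left(1+\frac{1}{n}\right)$ and, complementarily, $\frac{1}{n}\sum_{k=1}^{n}\left(1-\frac{k}{n}\right)=\frac{1}{2}\left(1-\frac{1}{n}\right)$; substituting these two values yields exactly the right member of (\ref{2.2}).

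For the left-hand inequality I would invoke the discrete Jensen inequality $f\!\left(\frac{1}{n}\sum_{k=1}^{n}x_{k}\right)\le \frac{1}{n}\sum_{k=1}^{n}f(x_{k})$, so that it only remains to compute the arithmetic mean of the nodes. Using the same Gauss sum one finds $\frac{1}{n}\sum_{k=1}^{n}x_{k}=\frac{a+b}{2}+\frac{b-a}{2n}$, and a one-line check shows this equals $\frac{1}{2}\left[\left(1-\frac{1}{n}\right)a+\left(1+\frac{1}{n}\right)b\right]$, precisely the argument appearing on the left of (\ref{2.2}). This completes the chain.

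The point requiring care is entirely algebraic: the shifted mean $\frac{1}{2}\left[\left(1-\frac{1}{n}\right)a+\left(1+\frac{1}{n}\right)b\right]$ is not the midpoint $\frac{a+b}{2}$ but is displaced by $\frac{b-a}{2n}$, reflecting that the sampling nodes $x_{1},\dots,x_{n}$ omit the left endpoint $a$ while including the right endpoint $b=x_{n}$. Once this asymmetric mean is identified correctly, both bounds follow mechanically---the upper one from termwise convexity, the lower one from Jensen---so I expect no genuine obstacle beyond verifying the two summation identities; letting $n\to\infty$ recovers the classical (HH) and serves as a consistency check.
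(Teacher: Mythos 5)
Your proposal is correct and follows essentially the same route as the paper's own proof: termwise convexity $f(x_k)\le\bigl(1-\tfrac{k}{n}\bigr)f(a)+\tfrac{k}{n}f(b)$ summed over $k$ for the right-hand bound, and the discrete Jensen inequality applied at the mean of the nodes for the left-hand bound. The only cosmetic difference is that the paper writes the convex-combination weights as $\tfrac{b-x_k}{b-a}$ and $\tfrac{x_k-a}{b-a}$ rather than $1-\tfrac{k}{n}$ and $\tfrac{k}{n}$, which are the same numbers.
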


\begin{proof}
	Let $x_{k}=a+k\frac{b-a}{n}$, ($k=1,2,\cdots ,n$). Then writing $x_{k}$ as 
	\begin{equation*}
		x_{k}=\frac{b-x_{k}}{b-a}a+\frac{x_{k}-a}{b-a}b
	\end{equation*}%
	and using%
	\begin{equation*}
		f(x_{k})\leq \frac{b-x_{k}}{b-a}f(a)+\frac{x_{k}-a}{b-a}f(b),
	\end{equation*}%
	one has%
	\begin{eqnarray*}
		\overset{n}{\underset{k=1}{\sum }}f(x_{k}) &\leq &\frac{f(a)}{b-a}\overset{n}%
		{\underset{k=1}{\sum }}(b-x_{k})+\frac{f(b)}{b-a}\overset{n}{\underset{k=1}{%
				\sum }}(x_{k}-a) \\
		&=&\frac{1}{2}[f(a)(n-1)+f(b)(n+1)],
	\end{eqnarray*}%
	and therefore,%
	\begin{equation}
		\frac{1}{n}\overset{n}{\underset{k=1}{\sum }}f(x_{k})\leq \frac{1}{2}\left[
		f(a)\left( 1-\frac{1}{n}\right) +f(b)\left( 1+\frac{1}{n}\right) \right] .
		\label{2.3}
	\end{equation}%
	On the other hand, the Jensen inequality yields%
	\begin{equation}
		\frac{1}{n}\overset{n}{\underset{k=1}{\sum }}f(x_{k})\geq f\left( \frac{1}{n}%
		\overset{n}{\underset{k=1}{\sum }}x_{k}\right) =f\left( \frac{\left( 1-\frac{%
				1}{n}\right) a+\left( 1+\frac{1}{n}\right) b}{2}\right) .  \label{2.4}
	\end{equation}
	
	By combining (\ref{2.3}) and (\ref{2.4}) we obtain (\ref{2.2}).
\end{proof}

\begin{corollary}
	After taking limit as $n\rightarrow \infty $ in (\ref{2.2}) and using the
	fact that the convex function is continuous, we obtain (HH).
\end{corollary}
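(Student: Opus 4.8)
The plan is simply to pass to the limit $n\rightarrow \infty$ termwise in the chain of inequalities (\ref{2.2}), identifying each of the three limits with the corresponding term of (HH). Since the three quantities are ordered by (\ref{2.2}) for every fixed $n$, once the individual limits are established the inequality (HH) follows at once, because weak inequalities are preserved under limits.

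First I would dispatch the two outer terms, which are elementary. On the right, $f(a)\left(1-\frac{1}{n}\right)+f(b)\left(1+\frac{1}{n}\right)\rightarrow f(a)+f(b)$, so the upper bound tends to $\frac{f(a)+f(b)}{2}$. On the left, the argument $\frac{\left(1-\frac{1}{n}\right)a+\left(1+\frac{1}{n}\right)b}{2}\rightarrow \frac{a+b}{2}$; because a convex function on $[a,b]$ is continuous at the interior point $\frac{a+b}{2}$, the value of $f$ at this argument converges to $f\left(\frac{a+b}{2}\right)$. This continuity fact is the only place where convexity enters beyond (\ref{2.2}) itself.

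The central term carries the real content. Here I would observe that the points $x_{k}=a+k\frac{b-a}{n}$ ($k=1,\dots,n$) are exactly the right endpoints of the uniform partition of $[a,b]$ into $n$ subintervals of common width $\Delta x=\frac{b-a}{n}$, so that
\begin{equation*}
	\frac{1}{n}\overset{n}{\underset{k=1}{\sum }}f(x_{k})=\frac{1}{b-a}\overset{n}{\underset{k=1}{\sum }}f(x_{k})\,\Delta x
\end{equation*}
is, up to the factor $\frac{1}{b-a}$, a Riemann sum of $f$ over $[a,b]$. A convex function on $[a,b]$ is continuous on $(a,b)$ and bounded, hence Riemann integrable, so as $n\rightarrow \infty$ this Riemann sum converges to $\int_{a}^{b}f(x)\,dx$ and the central term converges to $\frac{1}{b-a}\int_{a}^{b}f(x)\,dx$.

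The main obstacle, such as it is, lies precisely in this Riemann-sum convergence: one must know that the convex $f$ is integrable and that the right-endpoint sums converge to the integral. Both follow from the standard regularity of convex functions (continuity on the interior, boundedness, an at most measure-zero discontinuity set confined to the endpoints), which guarantees Riemann integrability. Combining the three limits with the monotonicity of the limit operation applied to (\ref{2.2}) yields
\begin{equation*}
	f\left( \frac{a+b}{2}\right) \leq \frac{1}{b-a}\int_{a}^{b}f(x)\,dx\leq \frac{f(a)+f(b)}{2},
\end{equation*}
which is exactly (HH).
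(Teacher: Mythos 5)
Your proof is correct and follows exactly the route the paper intends: the corollary in the paper is proved precisely by letting $n\rightarrow\infty$ term by term in (\ref{2.2}), using continuity of the convex $f$ at the interior point $\frac{a+b}{2}$ for the left term and the convergence of the uniform right-endpoint Riemann sums to $\int_{a}^{b}f(x)\,dx$ for the middle term. Your write-up merely makes explicit the details the paper leaves implicit (Riemann integrability of a convex function despite possible endpoint discontinuities, and preservation of weak inequalities under limits), which is a faithful completion rather than a different argument.
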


The following two theorems are the simple consequences of (HH).

\begin{theorem}[a refinement of (RHH)]
	Let $f:[a,b]\rightarrow 
	%TCIMACRO{\U{211d} }%
	%BeginExpansion
	\mathbb{R}
	%EndExpansion
	$ be convex. Then%
	\begin{eqnarray}
		\frac{1}{b-a}\int_{a}^{b}f(x)dx &\leq &\frac{1}{b-a}\int_{a}^{b}f(x)\left[
		\ln \frac{(b-a)^{2}}{(b-x)(x-a)}-1\right] dx  \notag \\
		&\leq &\frac{f(a)+f(b)}{2}  \label{2.5}
	\end{eqnarray}
\end{theorem}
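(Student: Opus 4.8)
The plan is to write the middle quantity as a weighted integral with weight
\[
w(x)=\ln\frac{(b-a)^{2}}{(b-x)(x-a)}-1,
\]
so that the middle term of \eqref{2.5} equals $\frac{1}{b-a}\int_{a}^{b}f(x)w(x)\,dx$, and to prove the two inequalities separately. First I would record three elementary facts about $w$. Using $\int_{a}^{b}\ln(x-a)\,dx=\int_{a}^{b}\ln(b-x)\,dx=(b-a)\ln(b-a)-(b-a)$ one gets $\int_{a}^{b}w(x)\,dx=b-a$, so the weight averages to $1$; by AM--GM, $(b-x)(x-a)\le\big(\tfrac{b-a}{2}\big)^{2}$, hence $w(x)\ge\ln 4-1>0$ on $(a,b)$; and $w$ is symmetric about the midpoint $c=\tfrac{a+b}{2}$ because $(b-x)(x-a)$ is.

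The right-hand inequality is then immediate from the Hadamard part of the Fej\'er inequality established in the remark following the proof of (HH): since $w\ge 0$ is integrable and symmetric about $c$, that inequality applied with $g=w$ gives $\int_{a}^{b}f(x)w(x)\,dx\le\frac{f(a)+f(b)}{2}\int_{a}^{b}w(x)\,dx=\frac{f(a)+f(b)}{2}(b-a)$, and dividing by $b-a$ yields the claim.

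For the left-hand inequality I would set $h(x)=w(x)-1$, so that $\int_{a}^{b}h=0$, and it suffices to show $\int_{a}^{b}f(x)h(x)\,dx\ge 0$. The idea is to pass to the primitive $H(x)=\int_{a}^{x}h(t)\,dt$. Since $h$ has only logarithmic (hence integrable) singularities at the endpoints, $H$ is continuous with $H(a)=H(b)=0$, and the symmetry $h(a+b-x)=h(x)$ together with $\int_{a}^{b}h=0$ forces $H$ to be antisymmetric about $c$; in particular $H(c)=0$ and $\int_{a}^{b}H=0$. The sign pattern of $h$---positive near the endpoints and negative in the middle, with a single sign change in each half---then gives $H\ge 0$ on $[a,c]$ and $H\le 0$ on $[c,b]$. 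Integrating by parts (the boundary term vanishes since $H(a)=H(b)=0$) gives $\int_{a}^{b}f h=-\int_{a}^{b}f'H$, and using $\int_{a}^{b}H=0$ to subtract a constant multiple of $H$,
\[
-\int_{a}^{b}f'(x)H(x)\,dx=-\int_{a}^{b}\big(f'(x)-m\big)H(x)\,dx,
\]
where $m$ is any value with $f'(x)\le m$ for $x<c$ and $f'(x)\ge m$ for $x>c$ (such $m$ exists, e.g.\ $m\in[f'(c^{-}),f'(c^{+})]$, because $f'$ is nondecreasing). On each half the factors $f'(x)-m$ and $H(x)$ have opposite signs, so $(f'(x)-m)H(x)\le 0$; hence $-\int_{a}^{b}(f'-m)H\ge 0$, which is exactly $\int_{a}^{b}f h\ge 0$.

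The main obstacle is the left inequality, and within it the crucial point is establishing the one-signedness of $H$ on each half-interval. This rests on locating the zeros of $h$---the equation $(b-x)(x-a)=(b-a)^{2}/e^{2}$ has exactly one root in each of $(a,c)$ and $(c,b)$, since the left side is a downward parabola with maximum $(b-a)^{2}/4>(b-a)^{2}/e^{2}$ at $c$---together with the symmetry of $h$. Once the sign of $H$ is pinned down, the monotonicity of $f'$ finishes the argument; the regularity needed (that a convex $f$ is absolutely continuous on compact subintervals with nondecreasing a.e.\ derivative, and that the endpoint terms vanish) is standard.
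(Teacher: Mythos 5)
Your proposal is correct, but it follows a genuinely different route from the paper's. The paper never isolates the weight $w$ at all: it applies (HH) on the subintervals $[a,x]$ and $[x,b]$, integrates both chains of inequalities over $x\in(a,b)$, and interchanges the order of integration so that the logarithmic weights $\ln\frac{b-a}{x-a}$ and $\ln\frac{b-a}{b-x}$ appear automatically; summing the two resulting chains (\ref{2.7}) and (\ref{2.8}) yields both sides of (\ref{2.5}) at once, using nothing beyond (HH) and Fubini --- in particular, no derivative of $f$ ever enters. You instead verify three structural properties of $w$ (average $1$, $w\ge \ln 4-1>0$ by AM--GM, symmetry about $c$) and prove the two inequalities separately. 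Your upper bound then drops out of the Fej\'er remark that the paper proves right after (HH); this is arguably more conceptual than the paper's computation, since it exposes exactly why the right-hand inequality holds: the weight is a nonnegative, symmetric density of total mass $b-a$. Your lower bound is a Karamata/convex-order argument: $h=w-1$ has mean zero, is symmetric, and is positive near the endpoints and negative in the middle, so its primitive $H$ vanishes at $a$, $c$, $b$ and satisfies $H\ge 0$ on $[a,c]$, $H\le 0$ on $[c,b]$, whence $\int_a^b fh=-\int_a^b\bigl(f'-m\bigr)H\,dx\ge 0$ for any $m\in[f'(c^-),f'(c^+)]$; your sign analysis of $h$ and $H$ and the choice of $m$ are all sound. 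What your route costs is the machinery for the lower bound (integration by parts, the a.e.\ nondecreasing derivative of a convex function, endpoint regularity --- all standard, as you say, but entirely avoided by the paper); what it buys is generality and insight: your argument actually shows that the uniform density on $[a,b]$ is dominated in the convex order by $w/(b-a)$, so the left inequality holds for every convex $f$ for this structural reason, and the same proof would apply to any symmetric unit-mass weight whose deviation from $1$ has the same single-crossing sign pattern.
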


\begin{proof}
	For any $x\in (a,b]$ one has%
	\begin{equation*}
		f\left( \frac{a+x}{2}\right) \leq \frac{1}{x-a}\int_{a}^{x}f(t)dt\leq \frac{%
			f(a)+f(x)}{2}.
	\end{equation*}%
	Integrating over $(a,b)$ we have 
	\begin{equation}
		\int_{a}^{b}f\left( \frac{a+x}{2}\right) dx\leq \int_{a}^{b}\frac{1}{x-a}%
		\left( \int_{a}^{x}f(t)dt\right) dx\leq \int_{a}^{b}\frac{f(a)+f(x)}{2}dx.
		\label{2.6}
	\end{equation}%
	After simple calculations, (\ref{2.6}) leads to%
	\begin{equation}
		2\int_{a}^{\frac{a+b}{2}}f(x)dx\leq \int_{a}^{b}f(x)\ln \frac{b-a}{x-a}%
		dx\leq \frac{1}{2}\left[ f(a)(b-a)+\int_{a}^{b}f(x)dx\right] .  \label{2.7}
	\end{equation}%
	Similarly, integrating the inequality%
	\begin{equation*}
		f\left( \frac{x+b}{2}\right) \leq \frac{1}{b-x}\int_{a}^{b}f(t)dt\leq \frac{%
			f(x)+f(b)}{2}
	\end{equation*}%
	over $(a,b)$ we get 
	\begin{equation*}
		\int_{a}^{b}f\left( \frac{x+b}{2}\right) dx\leq \int_{a}^{b}\frac{1}{b-x}%
		\left( \int_{a}^{b}f(t)dt\right) dx\leq \int_{a}^{b}\frac{f(x)+f(b)}{2}dx
	\end{equation*}%
	which leads to%
	\begin{equation}
		2\int_{\frac{a+b}{2}}^{b}f(x)dx\leq \int_{a}^{b}f(x)\ln \frac{b-a}{b-x}%
		dx\leq \frac{1}{2}\left[ f(b)(b-a)+\int_{a}^{b}f(x)dx\right] .  \label{2.8}
	\end{equation}%
	After summing up (\ref{2.7}) and (\ref{2.8}) we obtain (\ref{2.5}).
\end{proof}

\begin{theorem}
	Let $f:[0,\infty )\rightarrow (0,\infty )$ be a strictly convex function and 
	$\overset{\infty }{\underset{k=1}{\sum }}f(x_{k})<\infty $. Then%
	\begin{equation}
		\overset{\infty }{\underset{k=1}{\sum }}f\left( k-\frac{1}{2}\right)
		<\int_{a}^{b}f(x)dx<\frac{1}{2}f(0)+\overset{\infty }{\underset{k=1}{\sum }}%
		f(k).  \label{2.9}
	\end{equation}
\end{theorem}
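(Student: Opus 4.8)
The plan is to apply the inequality (HH) separately on each of the unit intervals $[k-1,k]$, $k=1,2,\dots$, and then to sum the resulting estimates over $k$; here the integral on the right-hand side of the claimed inequality is read as $\int_0^\infty f(x)\,dx$. On $[k-1,k]$ the endpoints are $a=k-1$, $b=k$, the length is $b-a=1$, and the midpoint is $k-\tfrac12$, so (HH) reads
\[
f\!\left(k-\tfrac12\right)\le \int_{k-1}^{k}f(x)\,dx\le \frac{f(k-1)+f(k)}{2}.
\]
Because $f$ is strictly convex, neither the (LHH) step nor the (RHH) step can be an equality on a nondegenerate interval: the chord strictly dominates the graph except at the two endpoints, and the midpoint value is strictly below the integral mean. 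Hence both inequalities above are in fact strict for every $k$.

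Next I would sum over $k$. For the lower bound the integrals telescope, $\sum_{k=1}^{N}\int_{k-1}^{k}f=\int_0^N f\to\int_0^\infty f$, giving $\sum_{k=1}^\infty f(k-\tfrac12)\le \int_0^\infty f(x)\,dx$. For the upper bound I would reindex the shifted sum: since
\[
\sum_{k=1}^{\infty}\frac{f(k-1)+f(k)}{2}
=\frac12\Big(f(0)+\sum_{k=1}^\infty f(k)\Big)+\frac12\sum_{k=1}^\infty f(k)
=\frac12 f(0)+\sum_{k=1}^\infty f(k),
\]
the interval estimates combine to $\int_0^\infty f(x)\,dx\le \tfrac12 f(0)+\sum_{k=1}^\infty f(k)$.

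Finally I would address convergence and strictness, which is where the only real care is needed. Convergence is automatic: every quantity in sight is positive, and the weak upper estimate already shows $\int_0^\infty f\le \tfrac12 f(0)+\sum f(k)<\infty$ under the hypothesis $\sum f(k)<\infty$, which in turn forces the series $\sum f(k-\tfrac12)$ to converge as well. To upgrade the two summed inequalities from $\le$ to $<$, I would observe that summing infinitely many strict inequalities need not yield a strict inequality in general, but here each gap is a \emph{strictly positive} number — for instance $\int_{k-1}^{k}f-f(k-\tfrac12)>0$ — so the series of gaps has strictly positive terms and therefore strictly positive sum. This is the main (and essentially the only) obstacle in the argument: one must genuinely invoke strict convexity to guarantee a positive gap on each interval rather than merely a nonnegative one, so that the strictness survives the passage to the limit.
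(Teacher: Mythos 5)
Your proof is correct, and its core is the same as the paper's: split the half-line into unit intervals, apply (HH) on each, and sum. The paper's execution differs in packaging: it first proves a finite-partition version on a general interval $[a,b]$ cut into $n$ equal pieces, then specializes to $a=0$, $b=n$ (so the pieces become exactly your $[k-1,k]$) and lets $n\rightarrow \infty $, using $\lim_{n\rightarrow \infty }f(n)=0$ to kill the boundary term $\frac{1}{2}f(n)$. You instead sum the infinite series directly, which lets you bypass both the limit over finite partitions and the explicit appeal to $f(n)\rightarrow 0$ (the latter is absorbed into your reindexing of $\sum_{k}\frac{f(k-1)+f(k)}{2}$, legitimate because $\sum_{k}f(k)<\infty $). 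The genuinely valuable difference is your closing strictness argument: it is not a pedantic flourish but a repair of a real gap. In the paper's proof, passing to the limit $n\rightarrow \infty $ in a strict inequality yields only ``$\leq $'', so the strict inequalities asserted in (\ref{2.9}) do not follow from the limit passage alone; the paper never addresses this. Your observation that each gap $\int_{k-1}^{k}f-f\left( k-\frac{1}{2}\right) >0$ and $\frac{f(k-1)+f(k)}{2}-\int_{k-1}^{k}f>0$ is strictly positive by strict convexity, so that the summed gaps remain strictly positive, is precisely the missing justification, and your version of the proof is therefore tighter than the published one.
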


\begin{proof}
	Denote $x_{0}=a$ and $x_{k}=a+(k-1)\frac{b-a}{n}$, ($k=1,2,\cdots ,n$).
	Since $f$ is strictly convex, we have%
	\begin{equation*}
		f\left( \frac{x_{k-1}+x_{k}}{2}\right) <\frac{1}{x_{k}-x_{k-1}}%
		\int_{x_{k-1}}^{x_{k}}f(x)dx<\frac{f(x_{k-1})+f(x_{k})}{2}\text{, }%
		(k=1,2,\cdots ,n).
	\end{equation*}%
	Taking into account the formulas%
	\begin{equation*}
		x_{k}-x_{k-1}=\frac{b-a}{n}\text{, }\frac{x_{k-1}+x_{k}}{2}=a+\left( k-\frac{%
			1}{2}\right) \frac{b-a}{n}
	\end{equation*}%
	and summing the inequalities above we obtain%
	\begin{eqnarray*}
		\overset{n}{\underset{k=1}{\sum }}\frac{1}{n}f\left( a+\left( k-\frac{1}{2}%
		\right) \frac{b-a}{n}\right) &<&\frac{1}{b-a}\int_{a}^{b}f(x)dx \\
		&<&\frac{1}{n}\left[ \frac{f(a)+f(b)}{2}+\overset{n-1}{\underset{k=1}{\sum }}%
		f\left( a+k\frac{b-a}{n}\right) \right]
	\end{eqnarray*}%
	Further, setting $a=0$, $b=n$ we have 
	\begin{equation*}
		\overset{n}{\underset{k=1}{\sum }}f\left( k-\frac{1}{2}\right)
		<\int_{a}^{b}f(x)dx<\frac{f(0)+f(n)}{2}+\overset{n-1}{\underset{k=1}{\sum }}%
		f(k).
	\end{equation*}%
	Taking limit as $n\rightarrow \infty $ and using $\underset{n\rightarrow
		\infty }{\lim }f(n)=0$ we obtain the desired formula (\ref{2.9}).
\end{proof}

\begin{remark}
	Since $f:[0,\infty )\rightarrow (0,\infty )$ is convex and $\underset{%
		n\rightarrow \infty }{\lim }f(n)$ is finite (actually, zero), then $f$ is
	monotonically decreasing and therefore the comparison of the areas under
	graphics gives%
	\begin{equation}
		\overset{\infty }{\underset{k=1}{\sum }}f(k)<\int_{0}^{\infty }f(x)dx<f(0)+%
		\overset{\infty }{\underset{k=1}{\sum }}f(k).  \label{2.10}
	\end{equation}%
	It is clear that, the inequalities (\ref{2.9}) are better than (\ref{2.10}).
\end{remark}

\begin{example}
	If $f(x)=e^{-x}$ then from (\ref{2.9}) we have 
	\begin{equation*}
		\frac{\sqrt{e}}{e-1}<1<\frac{1}{2}+\frac{1}{e-1}\Leftrightarrow \sqrt{e}<e-1<%
		\frac{1}{2}(e+1),
	\end{equation*}%
	whereas the formula (\ref{2.10}) gives the rougher estimate $1<e-1<e$.
\end{example}

\section{Some inequalities arising as a combination of the Hermite-Hadamard
	inequality with the other ones}

\begin{theorem}
	\label{teo3.1}Let $1<p<\infty $ and $\alpha p>1$. Let further, $f:[0,\infty
	)\rightarrow (0,\infty )$ be convex and such that%
	\begin{equation*}
		\left\Vert x^{1-\alpha }f(x)\right\Vert _{p}\equiv \left( \int_{0}^{\infty
		}\left( x^{1-\alpha }f(x)\right) ^{p}dx\right) ^{1/p}<\infty .
	\end{equation*}%
	Then%
	\begin{equation}
		2^{1-\alpha +\frac{1}{p}}\leq \frac{\left\Vert x^{-\alpha
			}\int_{0}^{x}f\right\Vert _{p}}{\left\Vert x^{1-\alpha }f(x)\right\Vert _{p}}%
		\leq \frac{1}{\alpha -1/p}.  \label{3.1}
	\end{equation}
\end{theorem}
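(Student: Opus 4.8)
The plan is to prove the two bounds in (\ref{3.1}) by completely different means: the lower bound will come from the left half of (\ref{HH}), while the upper bound is a power-weighted form of the classical Hardy inequality. Only the lower bound will actually use convexity; the upper (Hardy) bound holds for every nonnegative $f$ with $\|x^{1-\alpha}f\|_p<\infty$.

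For the lower bound I would note that $f$ is convex on each interval $[0,x]$, so (LHH) gives $\frac{1}{x}\int_0^x f(t)\,dt\ge f\!\left(\frac{x}{2}\right)$, that is $x^{-\alpha}\int_0^x f\ge x^{1-\alpha}f(x/2)\ge 0$. Since both sides are nonnegative I may raise to the $p$-th power and integrate, and then the substitution $u=x/2$ gives
\begin{equation*}
\int_0^\infty\!\left(x^{-\alpha}\int_0^x f\right)^{\!p}\!dx\ \ge\ \int_0^\infty x^{(1-\alpha)p}f(x/2)^p\,dx\ =\ 2^{(1-\alpha)p+1}\!\int_0^\infty u^{(1-\alpha)p}f(u)^p\,du .
\end{equation*}
Taking $p$-th roots and dividing by $\|x^{1-\alpha}f\|_p$ produces the constant $2^{((1-\alpha)p+1)/p}=2^{1-\alpha+1/p}$, which is exactly the left inequality in (\ref{3.1}).

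For the upper bound I would rewrite the quantity being estimated as $\int_0^\infty\big(\frac{1}{x}\int_0^x f\big)^p x^{(1-\alpha)p}\,dx$ and apply the power-weighted Hardy inequality with weight $x^a$, $a=p(1-\alpha)$. Its admissibility requirement $a<p-1$ is precisely the hypothesis $\alpha p>1$, and the sharp Hardy constant $\frac{p}{\,p-1-a\,}$ collapses to $\frac{p}{\alpha p-1}=\frac{1}{\alpha-1/p}$, which is the right inequality in (\ref{3.1}). If a self-contained derivation is preferred to quoting Hardy, the same bound follows from the splitting $f(t)=\big[f(t)t^{\beta}\big]\,t^{-\beta}$, Hölder's inequality on $[0,x]$, integration in $x$ combined with Fubini, and a final optimization over the free parameter $\beta$.

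I expect the upper bound to be the delicate step. A naive exponent in the Hölder splitting produces the divergent tail $\int_t^\infty dx/x$, so $\beta$ must be confined to the window $1-\alpha<\beta<1/p'$ (which is nonempty exactly because $\alpha p>1$) in order to keep both the inner Hölder factor and the outer Fubini integral finite; the sharp constant $1/(\alpha-1/p)$ emerges only after minimizing the resulting constant over this range. The lower bound, by contrast, is immediate once (LHH) and the scaling $u=x/2$ are in place, and this is the only point at which the convexity hypothesis is genuinely used.
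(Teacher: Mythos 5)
Your proposal is correct and is essentially the paper's own proof: the lower bound comes from (LHH) applied on $[0,x]$ followed by the substitution $u=x/2$, giving the constant $2^{1-\alpha+1/p}$, and the upper bound is exactly the weighted Hardy inequality with constant $\frac{p}{\alpha p-1}=\frac{1}{\alpha-1/p}$, just as in the paper (which merely chains the two estimates into a single string of inequalities rather than stating them separately). Your closing observation that convexity enters only in the lower bound matches the structure of the paper's argument as well.
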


\begin{corollary}
	(a) If $\alpha =1$, then%
	\begin{equation}
		2^{\frac{1}{p}}\leq \frac{\left\Vert \frac{1}{x}\int_{0}^{x}f\right\Vert _{p}%
		}{\left\Vert f\right\Vert _{p}}\leq \frac{1}{1-1/p}.  \label{3.2}
	\end{equation}
	
	(b) Let, in addition, $f\in L_{p}(0,\infty )$, ($\forall p>1$). Then by
	taking the limit in (\ref{3.2}) as $p\rightarrow \infty $ one has%
	\begin{equation}
		\underset{p\rightarrow \infty }{\lim }\frac{\left\Vert \frac{1}{x}%
			\int_{0}^{x}f\right\Vert _{p}}{\left\Vert f\right\Vert _{p}}=1.  \label{3.3}
	\end{equation}
\end{corollary}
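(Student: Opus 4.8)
The plan is to derive both parts of the corollary directly from Theorem~\ref{teo3.1}, with no new analytic input beyond a specialization of the parameter and an elementary limit.

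For part (a) I would set $\alpha=1$ in inequality (\ref{3.1}). Under this choice the weight $x^{1-\alpha}$ collapses to $1$, so that $\left\Vert x^{1-\alpha}f(x)\right\Vert_{p}=\left\Vert f\right\Vert_{p}$, while $x^{-\alpha}=1/x$, whence the numerator becomes $\left\Vert \frac{1}{x}\int_{0}^{x}f\right\Vert_{p}$. The lower exponent $2^{1-\alpha+1/p}$ reduces to $2^{1/p}$ and the upper bound $\frac{1}{\alpha-1/p}$ reduces to $\frac{1}{1-1/p}$. Before invoking Theorem~\ref{teo3.1} I would check that its hypotheses hold at $\alpha=1$: the requirement $\alpha p>1$ becomes simply $p>1$, which is in force, and the finiteness condition $\left\Vert x^{1-\alpha}f(x)\right\Vert_{p}<\infty$ is precisely $f\in L_{p}(0,\infty)$. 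This yields (\ref{3.2}) at once.

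For part (b) I would pass to the limit $p\to\infty$ in the two-sided estimate (\ref{3.2}) by the squeeze principle. The assumption $f\in L_{p}(0,\infty)$ for every $p>1$ guarantees that (\ref{3.2}) is valid for all such $p$, and since $f$ takes values in $(0,\infty)$ one has $0<\left\Vert f\right\Vert_{p}<\infty$, so the ratio in (\ref{3.2}) is a well-defined positive number for each $p$. It then remains to note that $2^{1/p}=e^{(\ln 2)/p}\to 1$ and $\frac{1}{1-1/p}=\frac{p}{p-1}\to 1$ as $p\to\infty$; the sandwiched quantity therefore also tends to $1$, which is exactly (\ref{3.3}).

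I do not anticipate a genuine obstacle here, as the statement is a corollary in the literal sense. The only point demanding a moment's care is the verification in part (b) that the hypotheses of Theorem~\ref{teo3.1} persist for \emph{every} $p>1$ simultaneously, so that (\ref{3.2}) may legitimately be regarded as a family of inequalities indexed by $p$ to which the limit is applied; this is precisely what the extra hypothesis $f\in L_{p}(0,\infty)$, $\forall p>1$, supplies.
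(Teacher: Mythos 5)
Your proposal is correct and follows exactly the paper's route: the paper derives (\ref{3.2}) by specializing (\ref{3.1}) to $\alpha=1$ and obtains (\ref{3.3}) by letting $p\rightarrow\infty$ in (\ref{3.2}), just as you do. Your explicit verification of the hypotheses at $\alpha=1$ and the squeeze argument with $2^{1/p}\rightarrow 1$ and $\frac{1}{1-1/p}\rightarrow 1$ merely spell out details the paper leaves implicit.
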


\begin{proof}
	We will use the classical weighted Hardy inequality, which asserts that%
	\begin{equation}
		\left( \int_{0}^{\infty }\left\vert x^{-\alpha
		}\int_{0}^{x}f(t)dt\right\vert ^{p}dx\right) ^{1/p}\leq c\left(
		\int_{0}^{\infty }\left\vert x^{1-\alpha }f(x)\right\vert ^{p}dx\right)
		^{1/p},  \label{3.4}
	\end{equation}%
	where $c=\frac{p}{\alpha p-1}$, $1<p<\infty $, $\alpha p>1.$
	
	Now, by (LHH) we have 
	\begin{equation*}
		f\left( \frac{x}{2}\right) <\frac{1}{x}\int_{0}^{x}f(t)dt\Rightarrow
		x^{1-\alpha }f\left( \frac{x}{2}\right) <x^{-\alpha }\int_{0}^{x}f(t)dt,
	\end{equation*}%
	and therefore%
	\begin{eqnarray}
		\int_{0}^{\infty }\left( x^{1-\alpha }f\left( \frac{x}{2}\right) \right)
		^{p}dx &\leq &\int_{0}^{\infty }\left( x^{-\alpha }\int_{0}^{x}f(t)dt\right)
		^{p}dx  \notag \\
		&&\overset{(\ref{3.4})}{\leq }\left( \frac{p}{\alpha p-1}\right)
		^{p}\int_{0}^{\infty }(x^{1-\alpha }f(x))^{p}dx.  \label{3.5}
	\end{eqnarray}%
	Since%
	\begin{equation*}
		\int_{0}^{\infty }\left( x^{1-\alpha }f\left( \frac{x}{2}\right) \right)
		^{p}dx=2^{p(1-\alpha )+1}\int_{0}^{\infty }\left( x^{1-\alpha }f(x)\right)
		^{p}dx,
	\end{equation*}%
	we have from (\ref{3.5}) the desired result (\ref{3.1}) and its consequences
	(\ref{3.2}) and (\ref{3.3}).
\end{proof}

\begin{remark}
	The left hand side of (\ref{3.1}) shows that under the conditions of Theorem %
	\ref{teo3.1} the following reverse Hardy's inequality is valid:%
	\begin{equation*}
		\left\Vert x^{-\alpha }\int_{0}^{x}f\right\Vert _{p}\geq 2^{1-\alpha +\frac{1%
			}{p}}\left\Vert x^{1-\alpha }f(x)\right\Vert _{p}.
	\end{equation*}
\end{remark}

\begin{example}
	Let $k>0$ and $f(x)=e^{-kx}$. Then (\ref{3.3}) yields 
	\begin{equation*}
		\underset{p\rightarrow \infty }{\lim }\left( \int_{0}^{\infty }\left( \frac{%
			1-e^{-kx}}{x}\right) ^{p}dx\right) ^{1/p}=k.
	\end{equation*}
\end{example}

In the next theorem we will make use of a combination of Hadamard's
inequality 
\begin{equation*}
	\frac{1}{b-a}\int_{a}^{b}f(x)dx\leq \frac{f(a)+f(b)}{2}
\end{equation*}%
and the inequality%
\begin{equation}
	\left( \int_{a}^{b}\left( \overset{n}{\underset{k=1}{\dprod }}%
	u_{k}(x)\right) dx\right) ^{n}\leq \overset{n}{\underset{k=1}{\dprod }}%
	\left( \int_{a}^{b}u_{k}^{n}(x)dx\right) ,  \label{3.6}
\end{equation}%
where $u_{1}\geq 0,\cdots ,u_{n}\geq 0.$

Recall that the inequality (\ref{3.6}) is a consequence of H\"{o}lder's
inequality and can be proved by induction.

We need also the following

\begin{lemma}
	\label{Lemma 3.2}If $u:[a,b]\rightarrow (0,\infty )$ is convex, then $u^{n}$
	is convex as well for any $n\in 
	%TCIMACRO{\U{2115} }%
	%BeginExpansion
	\mathbb{N}
	%EndExpansion
	.$
\end{lemma}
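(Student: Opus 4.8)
The plan is to recognize $u^{n}$ as the composition $\phi\circ u$, where $\phi(s)=s^{n}$, and to invoke the standard fact that the composition of a convex inner function with a convex, non-decreasing outer function is again convex. First I would record the two relevant properties of $\phi(s)=s^{n}$ on the domain $(0,\infty)$: it is non-decreasing there, since $\phi'(s)=ns^{n-1}>0$ for $s>0$, and it is convex there, since $\phi''(s)=n(n-1)s^{n-2}\geq 0$ (the case $n=1$ being simply the linear function). The hypothesis $u:[a,b]\to(0,\infty)$ is exactly what places the values of $u$ inside the region where both properties hold, so it is genuinely needed and not merely cosmetic.

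Next I would carry out the composition argument explicitly on a convex combination. Fix $x,y\in[a,b]$ and $t\in[0,1]$, and set $z=tx+(1-t)y$. By convexity of $u$ together with positivity of its values,
\[
0<u(z)\leq t\,u(x)+(1-t)\,u(y).
\]
Applying the non-decreasing map $s\mapsto s^{n}$ to both sides preserves the inequality and gives $u(z)^{n}\leq\bigl(t\,u(x)+(1-t)\,u(y)\bigr)^{n}$. Finally, applying convexity of $s\mapsto s^{n}$ (that is, the two-point Jensen inequality for $\phi$) to the right-hand side yields $\bigl(t\,u(x)+(1-t)\,u(y)\bigr)^{n}\leq t\,u(x)^{n}+(1-t)\,u(y)^{n}$. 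Chaining the last two displays produces $u(z)^{n}\leq t\,u(x)^{n}+(1-t)\,u(y)^{n}$, which is precisely the convexity of $u^{n}$, and since $x,y,t$ were arbitrary this completes the argument.

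The one subtle point, and the step I would guard most carefully, is the use of the \emph{monotonicity} of $\phi$ when passing from the convexity inequality for $u$ to the corresponding inequality for $u^{n}$: convexity of $\phi$ by itself is not sufficient, because without monotonicity the inner inequality need not be transported in the right direction. This is also where positivity of $u$ is indispensable, as $s\mapsto s^{n}$ fails to be monotone on all of $\mathbb{R}$ when $n$ is even. As an alternative I could argue by induction on $n$, writing $u^{n+1}=u\cdot u^{n}$ and using that the product of two non-negative convex functions with a common direction of monotonicity is convex; but that route first requires establishing the auxiliary product lemma, so the direct composition argument above is cleaner and entirely self-contained.
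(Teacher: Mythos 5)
Your proof is correct, but it follows a genuinely different route from the paper's. You write $u^{n}=\phi\circ u$ with $\phi(s)=s^{n}$ and invoke the composition rule (convex inner function, convex and non-decreasing outer function), checking both properties of $\phi$ on $(0,\infty)$ by derivative tests; your emphasis on where monotonicity (and hence positivity of $u$) enters is exactly the right point of care. The paper instead proceeds by induction on $n$: assuming $u$ and $u^{n}$ are convex, it multiplies the two convexity inequalities (legitimate since all quantities are positive) to obtain $u^{n+1}(\alpha x+\beta y)\leq(\alpha u(x)+\beta u(y))(\alpha u^{n}(x)+\beta u^{n}(y))$ with $\alpha+\beta=1$, and then observes that the right-hand side is at most $\alpha u^{n+1}(x)+\beta u^{n+1}(y)$, because after expansion the difference equals $\alpha\beta\,(u(x)-u(y))\left(u^{n}(x)-u^{n}(y)\right)\geq 0$, the two factors always having the same sign. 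What each approach buys: yours is a single-pass argument resting on a general principle, and it generalizes verbatim to any convex non-decreasing outer function rather than just powers; the paper's is purely algebraic and needs no derivative computations at all. One correction to your closing remark: the paper's induction does not require a general lemma on products of convex functions with a common direction of monotonicity, nor any monotonicity of $u$ itself --- since the two factors are $u$ and its own power $u^{n}$, the same-sign inequality $(u(x)-u(y))\left(u^{n}(x)-u^{n}(y)\right)\geq 0$ holds automatically, so that route is just as self-contained as yours.
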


A simple proof follows by induction. Namely, if the functions $u>0$ and $%
u^{n}$ are convex for some $n\geq 2$, i.e.%
\begin{equation*}
	u(\alpha x+\beta y)\leq \alpha u(x)+\beta u(y)
\end{equation*}%
and%
\begin{equation*}
	u^{n}(\alpha x+\beta y)\leq \alpha u^{n}(x)+\beta u^{n}(y)\text{, }(\alpha
	+\beta =1),
\end{equation*}%
then by multiplying these inequalities we have%
\begin{eqnarray*}
	u^{n+1}(\alpha x+\beta y) &\leq &\left( \alpha u(x)+\beta u(y)\right) \left(
	\alpha u^{n}(x)+\beta u^{n}(y)\right) \\
	&\leq &\alpha u^{n+1}(x)+\beta u^{n+1}(y),
\end{eqnarray*}%
where the last estimate is equivalent to the following obvious inequality:%
\begin{equation*}
	(u(x)-u(y))\left( u^{n}(x)-u^{n}(y)\right) \geq 0.
\end{equation*}

\begin{remark}
	The convexity of the functions $u_{1}\geq 0,u_{2}\geq 0,\cdots ,u_{n}\geq 0$
	does not guarantee the convexity of their product $u_{1}u_{2}\cdots u_{n}$.
	Indeed, for example, although the functions $u_{1}(x)=x^{2},u_{2}(x)=x^{2},%
	\cdots ,u_{n-1}(x)=x^{2}$ and $u_{n}(x)=(2-x)^{2n-2}$, ($n\geq 2$) are
	convex on $[0,2]$, their product $u(x)=x^{2n-2}(2-x)^{2n-2}$ is not convex
	because of $f^{\prime \prime }(1)=4(2n-2)(1-n)<0.$
\end{remark}

\begin{theorem}
	For given $n\geq 2$, let the functions $u_{1}\geq 0,u_{2}\geq 0,\cdots
	,u_{n}\geq 0$ be convex on $[a,b]$. Then%
	\begin{equation}
		\frac{1}{b-a}\int_{a}^{b}\left( \overset{n}{\underset{k=1}{\dprod }}%
		u_{k}(x)\right) dx\leq \frac{1}{2}\overset{n}{\underset{k=1}{\dprod }}%
		(u_{k}^{n}(a)+u_{k}^{n}(b))^{\frac{1}{n}}.  \label{3.7}
	\end{equation}
\end{theorem}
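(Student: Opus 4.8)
The plan is to combine the three ingredients assembled immediately above the statement: the H\"older-type product inequality (\ref{3.6}), Lemma \ref{Lemma 3.2}, and Hadamard's inequality (RHH). The shape of the target (\ref{3.7})---an $n$-th root of a product of the terms $u_k^n(a)+u_k^n(b)$---strongly suggests starting from (\ref{3.6}), which already delivers an $n$-th power of the left-hand integral on one side and a product of the integrals $\int_a^b u_k^n$ on the other. So the whole proof should reduce to estimating each factor $\int_a^b u_k^n$ by Hadamard's inequality and then extracting an $n$-th root.

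Concretely, I would first apply (\ref{3.6}) to the given nonnegative functions to obtain
$$\left(\int_a^b \prod_{k=1}^n u_k(x)\,dx\right)^n \leq \prod_{k=1}^n \int_a^b u_k^n(x)\,dx.$$
Since each $u_k$ is convex and nonnegative, Lemma \ref{Lemma 3.2} guarantees that every power $u_k^n$ is again convex, so (RHH) applies to it and gives
$$\int_a^b u_k^n(x)\,dx \leq (b-a)\,\frac{u_k^n(a)+u_k^n(b)}{2}, \qquad k=1,\dots,n.$$
Feeding these $n$ bounds into the product on the right of the first display yields
$$\left(\int_a^b \prod_{k=1}^n u_k(x)\,dx\right)^n \leq \frac{(b-a)^n}{2^n}\prod_{k=1}^n\bigl(u_k^n(a)+u_k^n(b)\bigr),$$
and taking the (positive) $n$-th root of both sides and dividing by $b-a$ produces exactly (\ref{3.7}).

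The only point demanding a word of care is that Lemma \ref{Lemma 3.2} is stated for strictly positive $u$, whereas the theorem allows $u_k\geq 0$. This is harmless: the inductive argument for the lemma rests on $(u(x)-u(y))(u^n(x)-u^n(y))\geq 0$, which holds for every nonnegative $u$ because $t\mapsto t^n$ is nondecreasing on $[0,\infty)$; alternatively one applies the lemma to $u_k+\varepsilon$ and lets $\varepsilon\to 0^+$. I do not anticipate any genuine obstacle beyond this---the argument is a clean three-step chaining of facts already in hand, and the main thing to keep straight is the bookkeeping of exponents and of the factor $\tfrac12$ when the $n$-th root is taken.
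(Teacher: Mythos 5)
Your proof is correct and is essentially the paper's own argument: both rest on the same three ingredients---Lemma \ref{Lemma 3.2} to make each $u_k^n$ convex, (RHH) applied to each $\int_a^b u_k^n$, and the H\"older-type inequality (\ref{3.6})---with only the immaterial difference that you invoke (\ref{3.6}) before the Hadamard step while the paper multiplies the Hadamard bounds first. Your closing remark on extending Lemma \ref{Lemma 3.2} from positive to merely nonnegative functions addresses a point the paper silently glosses over, and both of your fixes (the monotonicity of $t\mapsto t^n$ on $[0,\infty)$, or the $\varepsilon$-perturbation) are valid.
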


\begin{proof}
	Since $u_{k}$, ($k=1,2,\cdots ,n$) is convex on $[a,b]$, then $u_{k}^{n}$ is
	also convex by Lemma \ref{Lemma 3.2}. Then Hadamard's inequality yields%
	\begin{equation*}
		\frac{1}{b-a}\int_{a}^{b}u_{k}^{n}(x)dx\leq \frac{1}{2}\left[
		u_{k}^{n}(a)+u_{k}^{n}(b)\right] ,\text{ }(k=1,2,\cdots ,n).
	\end{equation*}%
	By multiplying these inequalities we have%
	\begin{equation}
		\frac{1}{(b-a)^{n}}\overset{n}{\underset{k=1}{\dprod }}\left(
		\int_{a}^{b}u_{k}^{n}(x)dx\right) \leq \frac{1}{2^{n}}\overset{n}{\underset{%
				k=1}{\dprod }}\left( u_{k}^{n}(a)+u_{k}^{n}(b)\right) .  \label{3.8}
	\end{equation}%
	Here, by making use of the inequality (\ref{3.6}), we get 
	\begin{equation*}
		\frac{1}{(b-a)^{n}}\left( \int_{a}^{b}\left( \overset{n}{\underset{k=1}{%
				\dprod }}u_{k}(x)\right) dx\right) ^{n}\leq \frac{1}{2^{n}}\overset{n}{%
			\underset{k=1}{\dprod }}\left( u_{k}^{n}(a)+u_{k}^{n}(b)\right) ,
	\end{equation*}%
	from which the inequality (\ref{3.7}) follows.
\end{proof}

\begin{remark}
	For $n=2$, the inequality (\ref{3.7}) was proved by Amrahov \cite{22}.
	Another generalization of Amrahov's result for the product of two functions
	was noted by D. A. Ion \cite{21}:
	
	If $u\geq 0,v\geq 0$ are convex and $\frac{1}{p}+\frac{1}{q}=1$, ($%
	1<p,q<\infty $), then%
	\begin{equation*}
		\frac{1}{b-a}\int_{a}^{b}u(t)v(t)dt\leq \frac{1}{2}\left(
		u^{p}(a)+u^{p}(b)\right) ^{1/p}\left( u^{q}(a)+u^{q}(b)\right) ^{1/q}.
	\end{equation*}%
	It should also be mentioned that, in the same paper \cite{21}\ Ion gives
	some generalization of Amrahov's result for the product of two functions in
	Orlicz spaces.
\end{remark}

\section{Hadamard's type inequality for the functions whose derivatives have
	an inflection point}

\begin{theorem}
	Given $c\in \lbrack a,b]$ and $f:[a,b]\rightarrow 
	%TCIMACRO{\U{211d} }%
	%BeginExpansion
	\mathbb{R}
	%EndExpansion
	$, let its derivative $f^{\prime }$ be concave on $[a,c]$ and convex on $%
	[c,b]$. Then%
	\begin{eqnarray}
		&&\left[ \frac{c-a}{b-a}f(a)+\frac{b-c}{b-a}f(b)\right] -\frac{1}{b-a}%
		\int_{a}^{b}f(x)dx  \notag \\
		&\leq &\frac{1}{3}\left[ \frac{(b-c)^{2}}{b-a}f^{\prime }(b)-\frac{(c-a)^{2}%
		}{b-a}f^{\prime }(a)+\left( \frac{a+b}{2}-c\right) f^{\prime }(c)\right] .
		\label{4.1}
	\end{eqnarray}
\end{theorem}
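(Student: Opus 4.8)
The plan is to reduce everything to the derivative $f'$ and then use its concavity on $[a,c]$ and convexity on $[c,b]$ separately, via the chord estimate, against suitable nonnegative weights.

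First I would clear the denominator and rewrite the left-hand side. Denoting by $L$ the left-hand side of (\ref{4.1}) and writing $(c-a)f(a)=\int_a^c f(a)\,dx$ and $(b-c)f(b)=\int_c^b f(b)\,dx$, the quantity $(b-a)L$ becomes
\[
\int_a^c\bigl(f(a)-f(x)\bigr)\,dx+\int_c^b\bigl(f(b)-f(x)\bigr)\,dx.
\]
Using $f(a)-f(x)=-\int_a^x f'(t)\,dt$ on $[a,c]$ and $f(b)-f(x)=\int_x^b f'(t)\,dt$ on $[c,b]$ (legitimate since a convex/concave $f'$ is continuous on the interior, so the fundamental theorem of calculus applies), and then interchanging the order of integration in each double integral, I obtain the key representation
\[
(b-a)\,L=-\int_a^c (c-t)\,f'(t)\,dt+\int_c^b (t-c)\,f'(t)\,dt .
\]

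Next I would bound the two integrals using the hypothesis on $f'$. On $[c,b]$ the function $f'$ is convex, hence lies below its chord $\ell_b(t)=f'(c)+\frac{f'(b)-f'(c)}{b-c}(t-c)$; since the weight $(t-c)$ is nonnegative there, the inequality survives integration and a one-line computation gives $\int_c^b (t-c)f'(t)\,dt\le\frac{(b-c)^2}{6}\bigl(2f'(b)+f'(c)\bigr)$. Symmetrically, on $[a,c]$ the function $f'$ is concave, hence lies above its chord $\ell_a(t)=f'(a)+\frac{f'(c)-f'(a)}{c-a}(t-a)$; as the weight $(c-t)$ is nonnegative, $\int_a^c (c-t)f'(t)\,dt\ge\frac{(c-a)^2}{6}\bigl(2f'(a)+f'(c)\bigr)$, so that $-\int_a^c(c-t)f'(t)\,dt$ is bounded above. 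Substituting both estimates into the representation yields
\[
(b-a)\,L\le \frac16\Bigl[(b-c)^2\bigl(2f'(b)+f'(c)\bigr)-(c-a)^2\bigl(2f'(a)+f'(c)\bigr)\Bigr].
\]

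Finally I would verify that the bound just obtained equals $(b-a)$ times the right-hand side of (\ref{4.1}). Collecting the coefficient of $f'(c)$ on the right gives $\frac16\bigl[(b-c)^2-(c-a)^2\bigr]f'(c)$, and the identity $(b-c)^2-(c-a)^2=(b-a)(a+b-2c)=2(b-a)\bigl(\tfrac{a+b}{2}-c\bigr)$ converts it into $\frac13(b-a)\bigl(\tfrac{a+b}{2}-c\bigr)f'(c)$, matching the stated term exactly, while the coefficients of $f'(a)$ and $f'(b)$ already agree. Dividing through by $b-a>0$ then gives (\ref{4.1}). I expect the only real care to be needed in this final bookkeeping—reconciling the $f'(c)$ term with the factor $\tfrac{a+b}{2}-c$—and in the observation that the chord inequalities may be integrated against the nonnegative weights $(t-c)$ and $(c-t)$ without reversing the inequality.
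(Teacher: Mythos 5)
Your proof is correct and is essentially the paper's own argument: the paper obtains the identical representation $(b-a)L=\int_a^b (x-c)f'(x)\,dx$ via integration by parts (rather than your FTC-plus-Fubini route), splits it at $c$ in the same way, and bounds each piece by the two-point convexity/concavity inequality against the sign-definite weight $(x-c)$, which is exactly your chord estimate written in the parametrization $x=(1-\lambda)u+\lambda v$. The only differences are cosmetic, and your final bookkeeping, including the identity $(b-c)^2-(c-a)^2=(b-a)(a+b-2c)$, matches the paper's computation.
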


\begin{corollary}
	In case of $c=\frac{a+b}{2}$ we have 
	\begin{equation}
		\frac{f(a)+f(b)}{2}-\frac{1}{b-a}\int_{a}^{b}f(x)dx\leq \frac{b-a}{12}%
		(f^{\prime }(b)-f^{\prime }(a))  \label{4.2}
	\end{equation}
\end{corollary}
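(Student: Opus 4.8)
The Corollary is the specialization $c=\frac{a+b}{2}$ of the preceding Theorem, in which $\frac{c-a}{b-a}=\frac{b-c}{b-a}=\frac12$, both $\frac{(b-c)^2}{b-a}$ and $\frac{(c-a)^2}{b-a}$ reduce to $\frac{b-a}{4}$, and the coefficient $\frac{a+b}{2}-c$ of $f^{\prime}(c)$ vanishes; substituting these into \eqref{4.1} collapses it directly to \eqref{4.2}. Hence the substance lies in the Theorem, and I would organize that proof around a single identity that converts the left-hand side, which involves $f$ and its integral, into an expression involving only $f^{\prime}$ tested against a weight that changes sign precisely at the inflection point.

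First I would clear the factor $\frac{1}{b-a}$ and rewrite the left-hand side. Writing $f(x)=f(a)+\int_a^x f^{\prime}(t)\,dt$ and interchanging the order of integration in $\int_a^b\!\int_a^x f^{\prime}(t)\,dt\,dx$ (equivalently integrating by parts) gives $\int_a^b f=(b-a)f(a)+\int_a^b(b-t)f^{\prime}(t)\,dt$. Feeding this into $(c-a)f(a)+(b-c)f(b)-\int_a^b f$ and using $f(b)-f(a)=\int_a^b f^{\prime}$ to absorb the boundary terms, everything telescopes to the clean identity
\[
(c-a)f(a)+(b-c)f(b)-\int_a^b f(x)\,dx=\int_a^b(t-c)f^{\prime}(t)\,dt.
\]
This is the crux: it replaces the entire left-hand side by the weighted integral of $f^{\prime}$ against $(t-c)$, whose sign flips exactly at $c$.

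Next I would split this integral at $c$ and bound each piece by the relevant chord of $f^{\prime}$. On $[c,b]$ the weight $(t-c)$ is nonnegative and $f^{\prime}$ is convex, so $f^{\prime}$ lies below its chord joining $(c,f^{\prime}(c))$ and $(b,f^{\prime}(b))$, giving $(t-c)f^{\prime}(t)\le(t-c)\big[\tfrac{b-t}{b-c}f^{\prime}(c)+\tfrac{t-c}{b-c}f^{\prime}(b)\big]$. On $[a,c]$ the weight $(t-c)$ is nonpositive and $f^{\prime}$ is concave, so $f^{\prime}$ lies \emph{above} its chord joining $(a,f^{\prime}(a))$ and $(c,f^{\prime}(c))$; multiplying that reversed inequality by the nonpositive factor $(t-c)$ flips it back, yielding an upper bound of the same $\le$ form. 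The decisive point, and the thing I expect to be the main obstacle to state correctly, is exactly this sign-matching: because the weight vanishes and changes sign precisely where the curvature of $f^{\prime}$ switches, both subintervals contribute upper bounds in the \emph{same} direction, so the two bounds may be added. (Continuity of $f^{\prime}$ on the open subintervals, guaranteed by convexity/concavity, is all the regularity the integral representation needs; endpoints are handled by a limiting argument.)

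Finally the two bounding integrands are quadratic in $t$, and I would evaluate $\int_c^b(t-c)\,[\text{chord}]$ and $\int_a^c(t-c)\,[\text{chord}]$ by the elementary substitutions $s=t-c$ and $s=t-a$; the only moments needed are $\int_0^L s(L-s)\,ds=\frac{L^3}{6}$ and $\int_0^L s^2\,ds=\frac{L^3}{3}$. These produce the coefficient $\frac{(b-c)^2}{3}$ on $f^{\prime}(b)$, the coefficient $-\frac{(c-a)^2}{3}$ on $f^{\prime}(a)$, and the coefficient $\frac{(b-c)^2-(c-a)^2}{6}$ on $f^{\prime}(c)$. Factoring the last via $(b-c)^2-(c-a)^2=(b-a)(a+b-2c)$ rewrites it as $\frac{b-a}{3}\big(\frac{a+b}{2}-c\big)$, and dividing through by $b-a$ reproduces \eqref{4.1} exactly; the Corollary \eqref{4.2} then follows by the substitution $c=\frac{a+b}{2}$ noted at the outset.
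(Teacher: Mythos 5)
Your proposal is correct and takes essentially the same route as the paper: your identity $(c-a)f(a)+(b-c)f(b)-\int_a^b f(x)\,dx=\int_a^b(t-c)f^{\prime}(t)\,dt$ is exactly the paper's integration-by-parts step \eqref{4.3}, your split at $c$ with the sign-aware chord bounds (concave piece bounded below by its chord, then flipped by the nonpositive weight) reproduces the paper's estimates \eqref{4.4} and \eqref{4.5}, and your moment computations yield the same coefficients $\frac{(b-c)^2}{3}$, $-\frac{(c-a)^2}{3}$, $\frac{b-a}{3}\left(\frac{a+b}{2}-c\right)$. The derivation of the Corollary by substituting $c=\frac{a+b}{2}$ into \eqref{4.1} is likewise exactly what the paper does.
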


\begin{proof}
	Integration by parts yields%
	\begin{equation*}
		\frac{c-a}{b-a}f(a)+\frac{b-c}{b-a}f(b)-\frac{1}{b-a}\int_{a}^{b}f(x)dx=%
		\frac{1}{b-a}\int_{a}^{b}(x-c)f^{\prime }(x)dx
	\end{equation*}%
	\begin{eqnarray}
		&=&\frac{1}{b-a}\int_{a}^{c}(x-c)f^{\prime }(x)dx+\frac{1}{b-a}%
		\int_{c}^{b}(x-c)f^{\prime }(x)dx  \notag \\
		&\equiv &A+B.  \label{4.3}
	\end{eqnarray}%
	By changing variables as $x=(1-\lambda )a+\lambda c$, ($0<\lambda <1$) in $A$
	and $x=(1-\lambda )c+\lambda b$ in $B$ and applying Jensen's inequality, we
	have%
	\begin{eqnarray}
		A &\equiv &\frac{1}{b-a}\int_{a}^{c}(x-c)f^{\prime }(x)dx=\frac{(a-c)^{2}}{%
			b-a}\int_{0}^{1}(\lambda -1)f^{\prime }((1-\lambda )a+\lambda c)d\lambda  
		\notag \\
		&\leq &\frac{(a-c)^{2}}{b-a}\int_{0}^{1}(\lambda -1)\left[ (1-\lambda
		)f^{\prime }(a)+\lambda f^{\prime }(c)\right] d\lambda   \notag \\
		&=&-\frac{(a-c)^{2}}{6\left( b-a\right) }\left[ 2f^{\prime }(a)+f^{\prime
		}(c)\right] ;  \label{4.4}
	\end{eqnarray}%
	\begin{eqnarray}
		B &\equiv &\frac{1}{b-a}\int_{c}^{b}(x-c)f^{\prime }(x)dx=\frac{(b-c)^{2}}{%
			(b-a)}\int_{0}^{1}\lambda f^{\prime }((1-\lambda )c+\lambda b)d\lambda  
		\notag \\
		&\leq &\frac{(b-c)^{2}}{(b-a)}\int_{0}^{1}\left( \lambda (1-\lambda
		)f^{\prime }(c)+\lambda ^{2}f^{\prime }(b)\right) d\lambda   \notag \\
		&=&\frac{(b-c)^{2}}{6(b-a)}[f^{\prime }(c)+2f^{\prime }(b)].  \label{4.5}
	\end{eqnarray}%
	It follows from (\ref{4.4}) and (\ref{4.5}) that%
	\begin{equation*}
		A+B\leq \frac{1}{3}f^{\prime }(b)\frac{(b-c)^{2}}{b-a}-\frac{1}{3}f^{\prime
		}(a)\frac{(a-c)^{2}}{b-a}+\frac{1}{6}f^{\prime }(c)(a+b-2c),
	\end{equation*}%
	which completes the proof.
\end{proof}

\begin{remark}
	A simple calculation shows that the equality in (\ref{4.1}) holds for the
	functions $f(x)=k(x-c)^{2}+m$, ($k,m\in 
	%TCIMACRO{\U{211d} }%
	%BeginExpansion
	\mathbb{R}
	%EndExpansion
	$).
\end{remark}

\begin{remark}
	In the "critic" cases $c=a$ or $c=b$, i.e. in the cases when $f^{\prime }$
	is convex or concave on $[a,b]$ we have from (\ref{4.1})%
	\begin{equation*}
		f(b)-\frac{1}{b-a}\int_{a}^{b}f(x)dx\leq \frac{b-a}{6}[f^{\prime
		}(a)+2f^{\prime }(b)]
	\end{equation*}%
	and%
	\begin{equation*}
		f(a)-\frac{1}{b-a}\int_{a}^{b}f(x)dx\leq -\frac{b-a}{6}[2f^{\prime
		}(a)+f^{\prime }(b)],
	\end{equation*}%
	respectively.
\end{remark}

\section{Various inequalities for functions having convex first or second
	order derivatives}

The first moment of a function $f$ about the center point $c=(a+b)/2$ is
defined by $M_{f}=\int_{a}^{b}\left( x-\frac{a+b}{2}\right) f(x)dx$. In the
following Theorem we obtain some estimation from above and below for $M_{f}$%
, when $f^{\prime }$ is convex.

\begin{theorem}
	\label{teo5.1}Suppose that the derivative $f^{\prime }$ of the function $%
	f:[a,b]\rightarrow 
	%TCIMACRO{\U{211d} }%
	%BeginExpansion
	\mathbb{R}
	%EndExpansion
	$ is convex. Then the first moment of $f$ about the center point $c=(a+b)/2$
	satisfies the following inequality 
	\begin{equation}
		A\leq \int_{a}^{b}\left( x-\frac{a+b}{2}\right) f(x)dx\leq B,  \label{5.1}
	\end{equation}%
	where%
	\begin{equation*}
		A=\frac{(a-b)^{2}}{8}(f(b)-f(a))-\frac{(b-a)^{3}}{48}(f^{\prime
		}(a)+f^{\prime }(b))
	\end{equation*}%
	and%
	\begin{equation*}
		B=\frac{(b-a)^{3}}{24}(f^{\prime }(a)+f^{\prime }(b)).
	\end{equation*}
\end{theorem}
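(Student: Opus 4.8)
The plan is to reduce the whole statement to a single integration by parts followed by the elementary (chord) characterization of convexity of $f'$. Write $c=\frac{a+b}{2}$ and $h=\frac{b-a}{2}$. Taking the antiderivative $\frac{(x-c)^2}{2}$ of $x-c$ and integrating by parts, I would first establish the identity
\begin{equation*}
M_f=\int_a^b\left(x-\frac{a+b}{2}\right)f(x)\,dx=\frac{(b-a)^2}{8}\bigl(f(b)-f(a)\bigr)-\frac12\int_a^b(x-c)^2f'(x)\,dx,
\end{equation*}
where the two boundary terms combine nicely because $(b-c)^2=(a-c)^2=h^2$, so both carry the common factor $\frac{(b-a)^2}{8}$. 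Since $\int_a^bf'=f(b)-f(a)$ and $h^2=\frac{(b-a)^2}{4}$, the very same identity can be recast in the symmetric form $M_f=\frac12\int_a^b\bigl(h^2-(x-c)^2\bigr)f'(x)\,dx$, in which the weight $w(x)=h^2-(x-c)^2$ is nonnegative on $[a,b]$ and vanishes at the endpoints.

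The only analytic input I would use is convexity of $f'$ in its chord form: for every $x\in[a,b]$,
\begin{equation*}
f'(x)\le\frac{b-x}{b-a}f'(a)+\frac{x-a}{b-a}f'(b).
\end{equation*}
For the upper estimate $M_f\le B$ I would insert this bound into the symmetric form $M_f=\frac12\int_a^bw(x)f'(x)\,dx$; since $w\ge0$ the inequality is preserved, and it remains only to integrate $w$ against the two linear weights. For the lower estimate $A\le M_f$ I would apply the \emph{same} chord bound instead to $I:=\int_a^b(x-c)^2f'(x)\,dx$ (again a nonnegative weight), obtaining an upper bound on $I$, which after the minus sign in the first identity becomes the desired lower bound on $M_f$. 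Thus a single convexity inequality, tested against two different nonnegative weights, delivers both sides of (\ref{5.1}).

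The remaining work is the explicit evaluation of the moments $\int_a^b(x-c)^2\frac{b-x}{b-a}\,dx$, $\int_a^b(x-c)^2\frac{x-a}{b-a}\,dx$ and their analogues with $w(x)$ in place of $(x-c)^2$. I expect these to be the only place needing care, but the substitution $t=x-c$ collapses everything: on the symmetric interval $t\in[-h,h]$ the factors $b-x=h-t$ and $x-a=h+t$ split into an even part $h$ and an odd part $\pm t$, and every odd power of $t$ integrates to zero. This immediately gives $\int_a^b(x-c)^2\frac{b-x}{b-a}\,dx=\int_a^b(x-c)^2\frac{x-a}{b-a}\,dx=\frac{h^3}{3}=\frac{(b-a)^3}{24}$, so that $I\le\frac{(b-a)^3}{24}(f'(a)+f'(b))$ and hence $M_f\ge A$; the analogous computation with $w$ yields the common value $\frac{2h^3}{3}$, whence $M_f\le\frac{(b-a)^3}{24}(f'(a)+f'(b))=B$.

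The main (and rather mild) obstacle is purely organizational: keeping track of the sign reversal in the first identity so that the single chord inequality is pushed in the correct direction for each of the two bounds, and checking that nothing beyond $f\in C^1$ with $f'$ convex is ever used — in particular the argument never invokes $f''$, matching the hypotheses exactly. As a consistency check one observes that equality propagates through the chord step precisely when $f'$ is affine, i.e. when $f$ is quadratic, in which case a direct computation confirms $A=B=M_f$.
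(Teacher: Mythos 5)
Your proposal is correct and is essentially the paper's own proof: your ``symmetric form'' $M_f=\frac12\int_a^b\bigl(h^2-(x-c)^2\bigr)f'(x)\,dx$ is exactly the paper's identity $M_f=\frac12\int_a^b(x-a)(b-x)f'(x)\,dx$, your first identity is the paper's equation (\ref{5.2}), and both arguments then insert the same chord bound $f'(x)\le\frac{b-x}{b-a}f'(a)+\frac{x-a}{b-a}f'(b)$ into these two weighted integrals. The only superficial difference is that you compute the resulting moments by the substitution $t=x-c$ and even/odd symmetry on $[-h,h]$, whereas the paper substitutes $x=(1-t)a+tb$ and evaluates the corresponding integrals over $[0,1]$.
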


\begin{proof}
	Integration by parts leads to%
	\begin{eqnarray*}
		\int_{a}^{b}(x-a)(b-x)f^{\prime }(x)dx &=&\int_{a}^{b}(x-a)(b-x)df(x) \\
		&=&2\int_{a}^{b}\left( x-\frac{a+b}{2}\right) f(x)dx.
	\end{eqnarray*}%
	Hence,%
	\begin{equation*}
		\int_{a}^{b}\left( x-\frac{a+b}{2}\right) f(x)dx=\frac{1}{2}%
		\int_{a}^{b}(x-a)(b-x)f^{\prime }(x)dx
	\end{equation*}%
	\begin{equation*}
		\text{(set }x=(1-t)a+tb\text{, }(x-a)(b-x)=(b-a)^{2}t(1-t)\text{, }0\leq
		t\leq 1\text{)}
	\end{equation*}%
	\begin{eqnarray*}
		&=&\frac{1}{2}(b-a)^{3}\int_{0}^{1}t(1-t)f^{\prime }((1-t)a+tb)dt \\
		&\leq &\frac{1}{2}(b-a)^{3}\int_{0}^{1}t(1-t)[f^{\prime }(a)(1-t)+f^{\prime
		}(b)t]dt \\
		&=&\frac{(b-a)^{3}}{24}(f^{\prime }(a)+f^{\prime }(b)).
	\end{eqnarray*}%
	This proved the right hand side of (\ref{5.1}).
	
	Further, again using integration by parts we have 
	\begin{equation*}
		\int_{a}^{b}\left( x-\frac{a+b}{2}\right) ^{2}f^{\prime }(x)dx=\frac{%
			(b-a)^{2}}{4}(f(b)-f(a))-2\int_{a}^{b}\left( x-\frac{a+b}{2}\right) f(x)dx,
	\end{equation*}%
	and therefore,%
	\begin{equation}
		\int_{a}^{b}\left( x-\frac{a+b}{2}\right) f(x)dx=\frac{(b-a)^{2}}{8}%
		(f(b)-f(a))-\frac{1}{2}\int_{a}^{b}\left( x-\frac{a+b}{2}\right)
		^{2}f^{\prime }(x)dx.  \label{5.2}
	\end{equation}%
	Furhermore, setting $x=(1-t)a+tb$, $\left( x-\frac{a+b}{2}\right)
	^{2}=(b-a)^{2}\left( t-\frac{1}{2}\right) ^{2}$ and $dx=(b-a)dt$, ($0\leq
	t\leq 1$), we get 
	\begin{equation*}
		\int_{a}^{b}\left( x-\frac{a+b}{2}\right) ^{2}f^{\prime
		}(x)dx=(b-a)^{3}\int_{0}^{1}\left( t-\frac{1}{2}\right) ^{2}f^{\prime
		}((1-t)a+tb)dt
	\end{equation*}%
	\begin{eqnarray*}
		&\leq &(b-a)^{3}\int_{0}^{1}\left( t-\frac{1}{2}\right) ^{2}[(1-t)f^{\prime
		}(a)+tf^{\prime }(b)]dt \\
		&=&(b-a)^{3}\left[ f^{\prime }(a)\int_{0}^{1}\left( t-\frac{1}{2}\right)
		^{2}(1-t)dt+f^{\prime }(b)\int_{0}^{1}t\left( t-\frac{1}{2}\right) ^{2}dt%
		\right]  \\
		&=&\frac{(b-a)^{3}}{24}(f^{\prime }(a)+f^{\prime }(b)).
	\end{eqnarray*}%
	Taking into account this in (\ref{5.2}) we obtain the left hand side of
	inequality (\ref{5.1}).
	
	The proof is complete.
\end{proof}

A straightforward calculation shows that the equality in both sides of (\ref%
{5.1}) is attained for $f(x)=k(x^{2}-(a+b)x)+n$, where $k$ and $n$ are
arbitrary real numbers.

\begin{theorem}
	\label{teo5.2}Given $f:[a,b]\rightarrow 
	%TCIMACRO{\U{211d} }%
	%BeginExpansion
	\mathbb{R}
	%EndExpansion
	$, let $f^{\prime \prime }$ be convex. Then the following inequality holds%
	\begin{equation}
		A\leq \frac{f(a)+f(b)}{2}-\frac{1}{b-a}\int_{a}^{b}f(x)dx\leq B,  \label{5.3}
	\end{equation}%
	where%
	\begin{equation*}
		A=\frac{b-a}{8}(f^{\prime }(b)-f^{\prime }(a))-\frac{(b-a)^{2}}{48}%
		(f^{\prime \prime }(a)+f^{\prime \prime }(b))
	\end{equation*}%
	and%
	\begin{equation*}
		B=\frac{(b-a)^{2}}{24}(f^{\prime \prime }(a)+f^{\prime \prime }(b)).
	\end{equation*}
\end{theorem}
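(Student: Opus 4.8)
The plan is to reduce this statement entirely to Theorem \ref{teo5.1}, applied not to $f$ itself but to its derivative $g := f'$. The hypothesis that $f''$ is convex says precisely that $g' = f''$ is convex, which is exactly the standing assumption of Theorem \ref{teo5.1}. Thus the whole difficulty collapses to recognizing that the left-hand quantity $\frac{f(a)+f(b)}{2} - \frac{1}{b-a}\int_a^b f$ is a rescaled first moment of $g = f'$ about the center point.

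First I would establish, by a single integration by parts, the identity
\[
\frac{f(a)+f(b)}{2} - \frac{1}{b-a}\int_a^b f(x)\,dx = \frac{1}{b-a}\int_a^b \left(x - \frac{a+b}{2}\right) f'(x)\,dx = \frac{1}{b-a}\,M_{f'},
\]
where $M_{f'} := \int_a^b \left(x - \frac{a+b}{2}\right) f'(x)\,dx$ is the first moment of $f'$. This is the special case $c = (a+b)/2$ of the integration-by-parts identity already exploited in the proof of the inflection-point Corollary (equation (\ref{4.3})), so no genuinely new computation is needed: taking $u = x - \frac{a+b}{2}$ and $dv = f'(x)\,dx$ produces the boundary term $\tfrac{b-a}{2}(f(a)+f(b))$ together with $-\int_a^b f$.

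Next I would invoke Theorem \ref{teo5.1} with $g = f'$. Since $g' = f''$ is convex, that theorem yields $A_g \leq M_g \leq B_g$ with
\[
A_g = \frac{(a-b)^2}{8}\bigl(g(b)-g(a)\bigr) - \frac{(b-a)^3}{48}\bigl(g'(a)+g'(b)\bigr), \qquad B_g = \frac{(b-a)^3}{24}\bigl(g'(a)+g'(b)\bigr).
\]
Substituting $g = f'$, $g' = f''$ and then dividing through by $b-a$ (using the identity of the previous paragraph) gives the two-sided bound on the deviation. The main --- though entirely routine --- step is checking that the rescaling by $1/(b-a)$ reproduces the stated constants exactly: since $(a-b)^2=(b-a)^2$ one gets $\frac{(a-b)^2}{8(b-a)} = \frac{b-a}{8}$ for the $A$-term, while $\frac{(b-a)^3}{24(b-a)} = \frac{(b-a)^2}{24}$ and $\frac{(b-a)^3}{48(b-a)} = \frac{(b-a)^2}{48}$ dispose of the remaining coefficients, producing precisely the $A$ and $B$ of the statement.

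Should a self-contained argument be preferred, I would instead mirror the proof of Theorem \ref{teo5.1} verbatim with $f$ replaced by $f'$: integration by parts turns $M_{f'}$ into $\frac{1}{2}\int_a^b (x-a)(b-x)f''(x)\,dx$ for the upper bound, and into $\frac{(b-a)^2}{8}(f'(b)-f'(a)) - \frac{1}{2}\int_a^b \left(x - \frac{a+b}{2}\right)^2 f''(x)\,dx$ for the lower bound; applying the convexity estimate $f''((1-t)a+tb) \leq (1-t)f''(a) + t f''(b)$ under the substitution $x = (1-t)a+tb$ then reduces everything to elementary $t$-integrals. Either way, the only thing to watch is the bookkeeping of constants; there is no genuine analytic obstacle beyond the single convexity inequality for $f''$ that Theorem \ref{teo5.1} already uses.
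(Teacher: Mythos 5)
Your proposal is correct, and it takes a genuinely different route from the paper. The paper proves Theorem \ref{teo5.2} from scratch: it performs integration by parts twice to get $\int_a^b (x-a)(b-x)f''(x)\,dx=(b-a)(f(a)+f(b))-2\int_a^b f(x)\,dx$ for the upper bound, then twice more to get the analogous identity with the kernel $\left(x-\frac{a+b}{2}\right)^2$ for the lower bound, and in each case applies the convexity of $f''$ under the substitution $x=(1-t)a+tb$ --- in other words, it transplants the entire proof of Theorem \ref{teo5.1} with $f$ replaced by $f'$ (this is exactly your fallback third paragraph). You instead notice that this duplication is unnecessary: a single integration by parts identifies $\frac{f(a)+f(b)}{2}-\frac{1}{b-a}\int_a^b f$ with $\frac{1}{b-a}M_{f'}$, and then Theorem \ref{teo5.1} applied to $g=f'$ (legitimate, since its hypothesis ``$g'$ convex'' is precisely ``$f''$ convex'') delivers both bounds at once; your bookkeeping of the constants ($\frac{(b-a)^3}{48(b-a)}=\frac{(b-a)^2}{48}$, etc.) checks out exactly against the stated $A$ and $B$. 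Your reduction is shorter, avoids redoing four integrations by parts and two convexity estimates, and makes transparent the structural fact that Theorem \ref{teo5.2} \emph{is} Theorem \ref{teo5.1} for the derivative; what the paper's self-contained computation buys in exchange is independence from Theorem \ref{teo5.1} and a display of the explicit kernel identities, but logically your argument establishes the same result with less work.
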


\begin{proof}
	Integration by parts twice gives%
	\begin{equation*}
		\int_{a}^{b}(x-a)(b-x)f^{\prime \prime
		}(x)dx=(b-a)(f(a)+f(b))-2\int_{a}^{b}f(x)dx.
	\end{equation*}%
	Hence,%
	\begin{eqnarray*}
		\frac{f(a)+f(b)}{2}-\frac{1}{b-a}\int_{a}^{b}f(x)dx &=&\frac{1}{2(b-a)}%
		\int_{a}^{b}(x-a)(b-x)f^{\prime \prime }(x)dx \\
		\text{(Set }x &=&(1-t)a+tb\text{, }0\leq t\leq 1\text{)} \\
		&=&\frac{(b-a)^{2}}{2}\int_{0}^{1}t(1-t)f^{\prime \prime }((1-t)a+tb)dt \\
		&\leq &\frac{(b-a)^{2}}{2}\int_{0}^{1}t(1-t)[(1-t)f^{\prime \prime
		}(a)+tf^{\prime \prime }(b)]dt \\
		&=&\frac{(b-a)^{2}}{24}(f^{\prime \prime }(a)+f^{\prime \prime }(b)).
	\end{eqnarray*}%
	The right hand side of (\ref{5.3}) is proved.
	
	Straightforward calculations show that, integration by parts twice yields%
	\begin{eqnarray*}
		&&\int_{a}^{b}\left( x-\frac{a+b}{2}\right) ^{2}f^{\prime \prime }(x)dx \\
		&=&\left( \frac{b-a}{2}\right) ^{2}(f^{\prime }(b)-f^{\prime }(a))-2(b-a)%
		\left[ \frac{f(a)+f(b)}{2}-\frac{1}{b-a}\int_{a}^{b}f(x)dx\right] .
	\end{eqnarray*}%
	Hence,%
	\begin{eqnarray}
		&&\frac{f(a)+f(b)}{2}-\frac{1}{b-a}\int_{a}^{b}f(x)dx  \notag \\
		&=&\frac{b-a}{8}(f^{\prime }(b)-f^{\prime }(a))-\frac{1}{2(b-a)}%
		\int_{a}^{b}\left( x-\frac{a+b}{2}\right) ^{2}f^{\prime \prime }(x)dx.
		\label{5.4}
	\end{eqnarray}%
	Setting $x=(1-t)a+tb$, ($0\leq t\leq 1$) and using the convexity of $%
	f^{\prime \prime }$, we have%
	\begin{equation*}
		\int_{a}^{b}\left( x-\frac{a+b}{2}\right) ^{2}f^{\prime \prime
		}(x)dx=(b-a)^{3}\int_{0}^{1}\left( t-\frac{1}{2}\right) ^{2}f^{\prime \prime
		}((1-t)a+tb)dt
	\end{equation*}%
	\begin{eqnarray*}
		&\leq &(b-a)^{3}\left[ f^{\prime \prime }(a)\int_{0}^{1}\left( t-\frac{1}{2}%
		\right) ^{2}(1-t)dt+f^{\prime \prime }(b)\int_{0}^{1}\left( t-\frac{1}{2}%
		\right) ^{2}tdt\right]  \\
		&=&\frac{(b-a)^{3}}{24}(f^{\prime \prime }(a)+f^{\prime \prime }(b)).
	\end{eqnarray*}%
	By making use of this in (\ref{5.4}) we obtain the left hand side of
	inequality (\ref{5.3}).
	
	The proof is complete.
\end{proof}

It is easy to verify that the equality in both sides of (\ref{5.1}) is
attained for the functions $f(x)=k(2x^{3}-3(a+b)x^{2})+mx+n$, with arbitrary
real numbers $k$, $m$ and $n$.

\begin{remark}
	There are several results in the literature under the condition of the
	convexity of $\left\vert f^{\prime }\right\vert $ or $\left\vert f^{\prime
		\prime }\right\vert $ (see, e.g. \cite{27,28,29}). As far as we know, the
	conditions and assertions of our theorems \ref{teo5.1}, \ref{teo5.2} and \ref%
	{teo5.3}\ completely differ from those known in the literature.
\end{remark}

In the following theorem we give some estimations for the mean value of a
function $f$ whose first derivative is convex.

\begin{theorem}
	\label{teo5.3} Let $f:[a,b]\rightarrow 
	%TCIMACRO{\U{211d} }%
	%BeginExpansion
	\mathbb{R}
	%EndExpansion
	$ be differentiable and its derivative $f^{\prime }$ be convex. Then
	
	(a) 
	\begin{equation}
		N\leq \frac{1}{b-a}\int_{a}^{b}f(x)dx\leq M,  \label{5.5}
	\end{equation}%
	where%
	\begin{equation*}
		N=\frac{1}{3}(f(a)+2f(b))-\frac{1}{6}f^{\prime }(b)(b-a)
	\end{equation*}%
	and%
	\begin{equation*}
		M=\frac{1}{3}(f(b)+2f(a))+\frac{1}{6}f^{\prime }(a)(b-a);
	\end{equation*}
	
	(b) 
	\begin{equation}
		\mathcal{N}\leq \frac{1}{b-a}\int_{a}^{b}f(x)dx\leq \mathcal{M},  \label{5.6}
	\end{equation}%
	where%
	\begin{equation*}
		\mathcal{N}=f(a)+2f\left( \frac{a+b}{2}\right) -\frac{4}{b-a}\int_{a}^{\frac{%
				a+b}{2}}f(x)dx
	\end{equation*}%
	and%
	\begin{equation*}
		\mathcal{M}=f(b)+2f\left( \frac{a+b}{2}\right) -\frac{4}{b-a}\int_{\frac{a+b%
			}{2}}^{b}f(x)dx.
	\end{equation*}
\end{theorem}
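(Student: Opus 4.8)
The plan is to treat both parts by the same device used for Theorems \ref{teo5.1} and \ref{teo5.2} --- integration by parts followed by the substitution $x=(1-t)a+tb$ --- but with one extra twist, because the weight that ends up multiplying $f'$ will now change sign. For part (a) I would first record the elementary identity obtained by integrating $(x-\xi)f'(x)$ by parts for $\xi=a$ and for $\xi=b$ and taking the convex combination of the two in the ratio $1:2$:
\[\frac{1}{b-a}\int_a^b f(x)\,dx=\tfrac{2}{3}f(a)+\tfrac{1}{3}f(b)+(b-a)\int_0^1\Bigl(\tfrac{2}{3}-t\Bigr)G(t)\,dt,\qquad G(t):=f'\bigl((1-t)a+tb\bigr),\]
where $G$ is convex on $[0,1]$. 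The upper bound in (\ref{5.5}) is then exactly equivalent to $\int_0^1(\tfrac23-t)G(t)\,dt\le\tfrac16 G(0)$, since $G(0)=f'(a)$.

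Here lies the main obstacle. The weight $\tfrac23-t$ changes sign at $t=\tfrac23$, so --- unlike the sign-definite weights $t(1-t)$ and $(t-\tfrac12)^2$ that appear in Theorems \ref{teo5.1} and \ref{teo5.2} --- the chord inequality $G(t)\le(1-t)G(0)+tG(1)$ cannot be applied termwise; doing so only gives the weaker bound $f(a)+(b-a)\bigl[\tfrac13 f'(a)+\tfrac16 f'(b)\bigr]$, which exceeds $M$ by an amount that (via Hadamard's inequality applied to $f'$) is nonnegative but not zero. Thus the sharp constant in $M$ genuinely uses more than convexity of $f'$ as a pointwise chord estimate.

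To recover the sharp constant I would integrate by parts a second time. Setting $\tilde W(t):=\int_1^t(\tfrac23-s)\,ds=-\tfrac12\bigl(t-\tfrac13\bigr)(t-1)$, one has $\tilde W(1)=0$, $\tilde W(0)=-\tfrac16$ and $\int_0^1\tilde W=0$, whence $\int_0^1(\tfrac23-t)G\,dt-\tfrac16 G(0)=-\int_0^1\tilde W(t)G'(t)\,dt$ (legitimate because the convex function $G$ is absolutely continuous). Now $\tilde W$ changes sign exactly once, from negative to positive, at $t=\tfrac13$, while $G'$ is nondecreasing precisely because $G=f'$ is convex. Subtracting the constant $G'(\tfrac13)$ and using $\int_0^1\tilde W=0$ turns the integrand into $\tilde W(t)\bigl(G'(t)-G'(\tfrac13)\bigr)$, which is nonnegative pointwise; hence $\int_0^1\tilde W G'\ge0$ and the upper bound follows. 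The lower bound $N$ comes out of the symmetric computation, i.e. the $2:1$ convex combination, whose weight $\tfrac13-t$ is the mirror image and is handled by the identical monotonicity argument.

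For part (b) I would run the same scheme on the two halves $[a,c]$ and $[c,b]$ with $c=\frac{a+b}{2}$. Writing $f(x)=f(c)+\int_c^x f'$ and using Fubini on each half reduces the desired upper estimate $\tfrac{1}{b-a}\int_a^b f\le\mathcal M$ to the single inequality $\int_a^b W(s)f'(s)\,ds\ge0$, where $W(s)=s-a$ on $[a,c]$ and $W(s)=(b-a)-5(b-s)$ on $[c,b]$. A direct check shows that $W$ annihilates affine functions, $\int_a^b W=\int_a^b W(s)(s-a)\,ds=0$, so one integration by parts gives $\int_a^b W f'=-\int_a^b W_1 f''$ with $W_1(s):=\int_a^s W$ satisfying $W_1(a)=W_1(b)=0$ and $\int_a^b W_1=0$; since $W_1$ is positive on $(a,c]$ and then dips below zero exactly once before returning to $0$ at $b$, it has a single sign change (from $+$ to $-$), and the same monotonicity argument (now with the nondecreasing $f''$) yields $-\int_a^b W_1 f''\ge0$. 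The lower bound $\mathcal N$ follows from the mirror computation based on $\int_a^c f$. The only technical point to flag throughout is that ``$f''$'' here denotes the almost-everywhere derivative of the convex function $f'$, which exists and is nondecreasing, so all the integrations by parts are justified in the absolutely continuous (Stieltjes) sense; under these hypotheses no twice-differentiability need be assumed.
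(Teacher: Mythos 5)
Your proof is correct, but it takes a genuinely different route from the paper's. The paper gets all four bounds in one stroke: it applies (HH) to the convex function $f^{\prime}$ on the \emph{variable} subintervals $[a,x]$ and $[x,b]$ (inequalities (\ref{5.8}) and (\ref{5.9})), multiplies by the nonnegative weights $(x-a)$ and $(b-x)$, and integrates over $[a,b]$; the right halves of the two resulting chains give part (a) (inequalities (\ref{5.10}), (\ref{5.12})) and the left halves give part (b) (inequalities (\ref{5.11}), (\ref{5.13})), with no regularity discussion whatsoever. You instead prove each bound separately via an integration-by-parts identity whose kernel kills the offending boundary terms, a \emph{second} integration by parts, and a Levin--Ste\v{c}kin-type argument: a primitive with zero mean and a single sign change from $+$ to $-$, integrated against the nondecreasing a.e.\ derivative of the convex $f^{\prime}$, is nonpositive. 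I checked your computations: the identity with weight $\tfrac23 -t$, the primitive $\tilde W(t)=-\tfrac12\left(t-\tfrac13\right)(t-1)$ and its properties; in part (b) the reduction of the upper bound to $\int_a^b Wf^{\prime}\geq 0$, the vanishing of both moments $\int_a^b W=\int_a^b W(s)(s-a)\,ds=0$, and the single crossing of $W_1$ at $a+\tfrac35(b-a)$ all hold. One point you should make explicit in part (b): the ``mirror computation'' cannot be realized by the reflection $x\mapsto a+b-x$, since that substitution turns a convex $f^{\prime}$ into a concave one; it must be, as you evidently intend, a re-run with the reflected kernel $V(s)=W(a+b-s)$, whose primitive $-W_1(a+b-s)$ again changes sign exactly once from $+$ to $-$, so the same lemma applies. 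As for what each approach buys: the paper's is shorter, needs no absolute-continuity or a.e.-$f^{\prime\prime}$ technicalities, and yields (a) and (b) simultaneously; yours isolates the equality cases (equality forces $G^{\prime}$ constant, i.e.\ $f$ quadratic), and your observation that the single chord estimate $G(t)\leq(1-t)G(0)+tG(1)$ can only produce the weaker bound $f(a)+(b-a)\left[\tfrac13 f^{\prime}(a)+\tfrac16 f^{\prime}(b)\right]$ explains precisely why sharper information is needed --- which is exactly what the paper extracts by applying (HH) to $f^{\prime}$ on every subinterval $[a,x]$ rather than using the chord over all of $[a,b]$.
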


\begin{corollary}
	\begin{equation}
		\int_{\frac{a+b}{2}}^{b}f(x)dx-\int_{a}^{\frac{a+b}{2}}f(x)dx\leq \frac{1}{4}%
		(b-a)(f(b)-f(a)).  \label{5.7}
	\end{equation}
\end{corollary}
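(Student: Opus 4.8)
The plan is to read off (\ref{5.7}) directly from part (b) of Theorem \ref{teo5.3}, without any further integration. The key observation is that the two quantities $\mathcal{N}$ and $\mathcal{M}$ appearing in (\ref{5.6}) both sandwich the \emph{same} average $\frac{1}{b-a}\int_{a}^{b}f(x)\,dx$; hence they must themselves satisfy $\mathcal{N}\le\mathcal{M}$, and this single inequality already contains (\ref{5.7}).

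So first I would simply write down $\mathcal{N}\le\mathcal{M}$ and subtract. Forming $\mathcal{M}-\mathcal{N}$, the common term $2f\!\left(\frac{a+b}{2}\right)$ cancels, and what remains is
\begin{equation*}
\mathcal{M}-\mathcal{N}=\bigl(f(b)-f(a)\bigr)-\frac{4}{b-a}\left(\int_{\frac{a+b}{2}}^{b}f(x)\,dx-\int_{a}^{\frac{a+b}{2}}f(x)\,dx\right).
\end{equation*}
Since $\mathcal{M}-\mathcal{N}\ge 0$, this rearranges to
\begin{equation*}
\frac{4}{b-a}\left(\int_{\frac{a+b}{2}}^{b}f(x)\,dx-\int_{a}^{\frac{a+b}{2}}f(x)\,dx\right)\le f(b)-f(a),
\end{equation*}
and multiplying through by $\tfrac{b-a}{4}>0$ gives exactly (\ref{5.7}).

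The two things I would be careful about are trivial but worth stating: I must use the correct direction $\mathcal{N}\le\mathcal{M}$ (not the reverse), and I must note that $b-a>0$ so that clearing the denominator preserves the inequality. There is really no genuine obstacle at this stage—all the analytic content (the convexity of $f^{\prime}$ and the use of Hermite--Hadamard on the two halves $[a,\frac{a+b}{2}]$ and $[\frac{a+b}{2},b]$) has already been spent in establishing Theorem \ref{teo5.3}(b). For completeness I would remark that one could instead try to reach (\ref{5.7}) from part (a) or from Theorem \ref{teo5.1}, but those bounds are phrased in terms of $f^{\prime}(a),f^{\prime}(b)$ rather than $f(a),f(b)$, so part (b) is the clean and direct route.
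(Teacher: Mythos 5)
Your proposal is correct and is essentially the paper's own argument: the paper obtains (\ref{5.7}) by subtracting (\ref{5.11}) from (\ref{5.13}), which is precisely your observation that $\mathcal{N}\leq\mathcal{M}$ follows by transitivity through the common middle term $\frac{1}{b-a}\int_{a}^{b}f(x)\,dx$ in (\ref{5.6}), since (\ref{5.11}) and (\ref{5.13}) are exactly the two halves of that sandwich. The algebraic rearrangement and the sign remarks ($b-a>0$, direction of the inequality) match the paper's computation.
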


\begin{proof}
	Since $f^{\prime }$ is convex, (HH) leads to%
	\begin{eqnarray}
		f^{\prime }\left( \frac{a+x}{2}\right)  &\leq &\frac{1}{x-a}(f(x)-f(a))\leq 
		\frac{f^{\prime }(a)+f^{\prime }(x)}{2};  \label{5.8} \\
		f^{\prime }\left( \frac{x+b}{2}\right)  &\leq &\frac{1}{b-x}(f(b)-f(x))\leq 
		\frac{f^{\prime }(x)+f^{\prime }(b)}{2}.  \label{5.9}
	\end{eqnarray}%
	Multiplying the inequalities (\ref{5.8}) by $(x-a)$ and integrating over $%
	[a,b]$, after simple calculations we obtain%
	\begin{equation*}
		2(b-a)f\left( \frac{a+b}{2}\right) -4\int_{a}^{\frac{a+b}{2}}f(x)dx\leq
		\int_{a}^{b}f(x)dx-f(a)(b-a)
	\end{equation*}%
	\begin{equation*}
		\leq \frac{1}{4}f^{\prime }(a)(b-a)^{2}+\frac{1}{2}(b-a)f(b)-\frac{1}{2}%
		\int_{a}^{b}f(x)dx.
	\end{equation*}%
	The above inequalities can be written as two seperate inequalities:%
	\begin{equation}
		\frac{1}{b-a}\int_{a}^{b}f(x)dx\leq \frac{1}{3}(2f(a)+f(b))+\frac{1}{6}%
		f^{\prime }(a)(b-a)  \label{5.10}
	\end{equation}%
	and%
	\begin{equation}
		\frac{1}{b-a}\int_{a}^{b}f(x)dx+\frac{4}{b-a}\int_{a}^{\frac{a+b}{2}%
		}f(x)dx\geq f(a)+2f\left( \frac{a+b}{2}\right) .  \label{5.11}
	\end{equation}%
	In a similar way, multiplying inequalities (\ref{5.9}) by $(b-x)$ and
	integrating over $[a,b]$, after some calculations we have the following two
	inequalities:%
	\begin{equation}
		\frac{1}{b-a}\int_{a}^{b}f(x)dx\geq \frac{1}{3}(f(a)+2f(b))-\frac{1}{6}%
		f^{\prime }(b)(b-a)  \label{5.12}
	\end{equation}%
	and%
	\begin{equation}
		\frac{1}{b-a}\int_{a}^{b}f(x)dx+\frac{4}{b-a}\int_{\frac{a+b}{2}%
		}^{b}f(x)dx\leq f(b)+2f\left( \frac{a+b}{2}\right) .  \label{5.13}
	\end{equation}%
	Now, the inequalities (\ref{5.10}) and (\ref{5.12}) yields (\ref{5.5}) and
	the inequalities (\ref{5.11}) and (\ref{5.13}) yields (\ref{5.6}). The
	Corollary follows by subtracting (\ref{5.11}) from (\ref{5.13}).
	
	The proof is complete.
\end{proof}

\begin{example}
	For $0<a<x<b<\infty $ and $f(x)=\ln x$, the inequality (\ref{5.7}) yields%
	\begin{equation}
		a^{\frac{3a+b}{4(a+b)}}\cdot b^{\frac{a+3b}{4(a+b)}}\leq \frac{a+b}{2}.
		\label{5.14}
	\end{equation}%
	Since $\alpha +\beta =1$ for $\alpha =\frac{3a+b}{4(a+b)}$ and $\beta =\frac{%
		a+3b}{4(a+b)}$, then by the generalized AM-GM inequality we have%
	\begin{equation}
		a^{\alpha }\cdot b^{\beta }<\alpha \cdot a+\beta \cdot b=\frac{3a+b}{4(a+b)}%
		\cdot a+\frac{a+3b}{4(a+b)}\cdot b.  \label{5.15}
	\end{equation}%
	A simple calculation shows that%
	\begin{equation*}
		\frac{a+b}{2}<\frac{3a+b}{4(a+b)}\cdot a+\frac{a+3b}{4(a+b)}\cdot b,
	\end{equation*}%
	and therefore, the inequality (\ref{5.14}) is better than (\ref{5.15}).
\end{example}

%\begin{acknowledgements}
%If you'd like to thank anyone, place your comments here
%and remove the percent signs.
%\end{acknowledgements}

% Authors must disclose all relationships or interests that 
% could have direct or potential influence or impart bias on 
% the work: 
%
% \section*{Conflict of interest}
%
% The authors declare that they have no conflict of interest.

% BibTeX users please use one of
%\bibliographystyle{spbasic}      % basic style, author-year citations
%\bibliographystyle{spmpsci}      % mathematics and physical sciences
%\bibliographystyle{spphys}       % APS-like style for physics
%\bibliography{}   % name your BibTeX data base

\begin{thebibliography}{}
	%
	% and use \bibitem to create references. Consult the Instructions
	% for authors for reference list style.
	%
	
	\bibitem{22} Amrahov, \c{S}.E.: A note on Hadamard inequalities for the
	product of the convex functions, International J. of Research and Reviews in
	Applied Sciences, 5(2), 168-170 (2010).
	
	\bibitem{27} Alomari, M., Darus, M. and Dragomir, S.S.: New inequalities of
	Hermite-Hadamard type for functions whose second derivatives absolute values
	are quasi-convex, Tamkang J. of Math., 41(4), 353-359 (2010).
	
	\bibitem{1} Azpetia, A.G.: Convex functions and the Hadamard inequality,
	Revista Colombiana Mat., 28, 7-12 (1994).
	
	\bibitem{2} Bakula M.K., Pecaric, J.: Note on some Hadamard-type
	inequalities, J. Ineq. Pure Appl. Math. 5(3), Article 74 (2004).
	
	\bibitem{3} Bakula, M.K.: \"{O}zdemir M. E. and Pecaric, J. Hadamard-type
	inequalities for $m$-convex and $(\alpha ,m)$-convex functions, J. Ineq.
	Pure Appl. Math., 9(4), Article 96 (2008).
	
	\bibitem{28} Chen, F. and Liu, X.: On Hermite-Hadamard type inequalities for
	functions whose second derivatives absolute values are $s$-convex, ISRN
	Applied Mathematics, volume 2014, 1-4.
	
	\bibitem{4} Dahmani, Z.: On Minkowski and Hermite-Hadamard integral
	inequalities via fractional integration, Ann. Funct. Anal., 1(1),
	51-58 (2010).
	
	\bibitem{5} Dragomir, S.S. and Fitzpatrick, S.: The Hadamard inequalities
	for $s$-convex functions in the second sense, Demonstr. Math., 32(4),
	687-696 (1999).
	
	\bibitem{6} Dragomir, S.S. and Pearce, C.E.M.: Selected topics on
	Hermite-Hadamard inequalities and applications, RGMIA monographs,
	Victoria University (2000).
	
	\bibitem{7} Dragomir, S.S.: On some new inequalities of Hermite-Hadamard
	type for $m$-convex functions, Tamkang J. Math., 3(1)(2002).
	
	\bibitem{8} El Farissi, A.: Simple proof and refinement of Hermite-Hadamard
	inequality. J. Math. Inequal, 4(3), 365-369 (2010).
	
	\bibitem{12} Fink, A.M.: A best possible Hadamard inequality, Math. Inequal
	Appl., 1, 223-230 (1998).
	
	\bibitem{9} Florea, A. and Niculescu, C.P.: A Hermite-Hadamard inequality
	for convex-concave symmetric functions, Bull. Soc. Sci. Math. Roum., 50(98): No:2, 149-156 (2007).
	
	\bibitem{19} Gill, P.M., Pearce, C.E.M. and Pecaric, J.: Hadamard's
	inequality for $r$-convex functions, J. Math. Anal. Appl. 215(2),
	461-470 (1997).
	
	\bibitem{20} Ion, D.A.: Some estimates on the Hermite-Hadamard inequality
	through quasi-convex functions, Annals of University of Craiova, Math. Comp.
	Sci. Ser. 34, 82-87 (2007).
	
	\bibitem{21} Ion, D.A.: On an inequality due to Amrahov, Annals of
	University of Craiova, Math. Comp. Sci. Ser. 38(1), 92-95 (2011).
	
	\bibitem{14} K\i rmac\i , V., Bakula, M.: \"{O}zdemir, M. E. and Pecaric,
	J., Hadamard-type inequalities for $s$-convex functions, Appl. Math. Comp.
	193, 26-35 (2007).
	
	\bibitem{25} Mercer, A.M.D.: A variant of Jensen's inequality, J. Ineq.
	Pure and Appl. Math. 4(4), Article 73 (2003).
	
	\bibitem{23} Mitrinovic, D.S.: Pecaric, J. E. and Fink, A. M., Classical
	and New Inequalities in Analysis, Kluwer Academic, Dordrecht, (1993).
	
	\bibitem{10} Niculescu, C.P. and Persson, L.-E.: Old and new on the
	Hermite-Hadamard inequality, Real Anal. Exchange, 29, 663-686 (2003/2004).
	
	\bibitem{11} Niculescu, C.P. and Persson, L.-E.: Convex functions and their
	applications, A contemporary approach, CMS Books in Mathematics, V.23,
	Springer-Verlag, (2006).
	
	
	\bibitem{13} Niculescu, C.P.: The Hermite-Hadamard inequality for $\log $%
	-convex functions, Nonlinear Analysis, 75, 662-669 (2012).
	
	\bibitem{26} Sar\i kaya, M.Z.: Saglam, A. and Y\i ld\i r\i m, H., On some
	Hadamard-type inequalities for $h$-convex functions, J. of Math.
	Inequalities, 2(3), 335-341 (2008).
	
	\bibitem{29} Sar\i kaya, M.Z.: Saglam, A. and Y\i ld\i r\i m, H., New
	inequalities of Hermite-Hadamard type for functions whose second derivatives
	absolute values are convex or quasi-convex, International J. of Open
	Problems in Comp. Sci. and Math., 5(3), 1-14 (2012).
	
	\bibitem{16} Tseng, K.-L., Hwang, S.R. and Dragomir, S.S.: On some new
	inequalities of Hermite-Hadamard-Fejer type involving convex functions,
	Demons. Math., 40(1), 51-64 (2007).
	
	\bibitem{17} Qi, F. and Yang, Z.-L.: Generalizations and refinements of
	Hermite-Hadamard's inequality, The Rocky Mountain J. of Math., 35,
	235-251 (2005).
	
	\bibitem{18} Wu, S.-H.: On the weighted generalization of the
	Hermite-Hadamard inequality and its applications, The Rocky Mountain J. of
	Math., 39, 1741-1749 (2009).
	
	% etc
\end{thebibliography}

% Non-BibTeX users please use

\end{document}